\pgfplotsset{compat=1.5}
\theoremstyle{definition}% default
\newtheorem{theorem}{Theorem}
\newtheorem{corollary}{Corollary}
\newtheorem{proposition}{Proposition}
\newtheorem{example}{Example}
  \title{Fekete points, formation control, \\ and the balancing problem}
\author{Jan Maximilian Montenbruck, Daniel Zelazo, and Frank Allg\"ower% <-this % stops a space
%\thanks{The material in this paper has not been presented anywhere.}% <-this % stops a space
\thanks{
JM Montenbruck and F Allg\"ower are with the Institute for Systems Theory and Automatic Control, University of Stuttgart, and thank the German Research Foundation (DFG) for financial support of the project within the Cluster of Excellence in Simulation Technology (EXC 310/2) at the University of Stuttgart. D Zelazo is with the Faculty of Aerospace Engineering, Technion (Israel Institute of Technology). All authors were supported by the German-Israeli Foundation for Scientific Research and Development. For correspondence, \tt\fontsize{7.78pt}{1em}\selectfont mailto:jan-maximilian.montenbruck@ist.uni-stuttgart.de}
}% <-this % stops a space
\begin{document}
\maketitle
\thispagestyle{empty}
\pagestyle{empty}
\begin{abstract}
We study formation control problems. Our approach is to let a group of systems maximize their pairwise distances whilst bringing them all to a given submanifold, determining the shape of the formation. %While this maximization is usually reffered to as balancing in control theory, the maximizers are called Fekete points in mathematics. 
The algorithm we propose allows to initialize the positions of the individual systems in the ambient space of the given submanifold but brings them to the desired formation asymptotically in a stable fashion. Our control inherently consists of a distributed component, maximizing the pairwise distances, and a decentralized component, asymptotically stabilizing the submanifold. We establish a graph-theoretical interpretation of the equilibria that our control enforces and extend our approach to systems living on the special Euclidean group. Throughout the paper, we illustrate our approach on different examples.
\end{abstract}

\section{Introduction}
Multi-agent systems have become one of the central foci of attention in control theory. %
This interest partially stems from the relevance of related methods for control of robotic networks, cf. \cite{Bullo2009}. The central question in these systems usually reads as follows: which control algorithms will eventually drive the  group of systems to a desired configuration? The desired configuration itself will thereby depend on the particular group objective under scrutiny. For instance, one often wishes to have the systems eventually arrange their positions in a given shape or pattern. The task is trivial if the individual systems are controllable and one allows for controllers which drive them to pre-computed positions within the chosen formation shape; yet, if one was to solve the problem in this fashion, one would require a central processing entity gathering all information and sending commands to all members of the group. At the same time, a new controller would have to be designed %for every formation, thus offering no flexibility whatsoever in the shape of the formation
whenever the formation objectives changed, limiting its applicability. Instead, %one wishes 
it would be more desirable to have the systems automatically arrange in the desired formation while exchanging only relative (``distributed control'') or individual (``decentralized control'') information, cf. \cite{Fax2004}% {\color{blue}(for an overview of topics in decentralized control, we refer readers to \cite{Sandell1978})}
.

%We study 
In this paper, we study precisely these formation control problems, i.e., tasks in which a group of systems is asked to eventually arrange their positions in a specified shape. This shape shall thereby be defined by a compactly embedded submanifold of the space which the systems live in. This 
\phantom{Neque porro quisquam est qui dolorem ipsum quia dolor sit amet}
approach offers for great flexibility in the formation shape, quite similar to \cite{Zhang2007}, wherein the formation may be determined by an arbitrary Jordan curve. By construction, we arrive at a control law consisting of a distributed and a decentralized component.

In the following section, we link the formation control problem to the problem of asymptotically stabilizing so-called \emph{Fekete points}. Thereafter, we compare this approach with existing approaches to formation control. In section \ref{sec:control}, we present a control law that is shown to asymptotically stabilize these Fekete points. This control law is illustrated on circular and spherical formation shapes in section \ref{sec:circlsph}. Then, in section \ref{sec:graph}, we establish a novel connection between Fekete points and cycle spaces of graphs, i.e., that any equilibrium configuration of our control law must correspond to elementwise reciprocals of vectors from the cycle space of the underlying communication graph. The approach pursued in section \ref{sec:Eucl} equips our control with the capability to take orientations into account, thus allowing us to stabilize formations in the Euclidean groups. This, again, is illustrated on circular and spherical formations in section \ref{sec:circlsphEucl}. In section \ref{sec:ext}, we point towards a number of further possible extensions and section \ref{sec:conc} concludes the paper.

\section{Fekete Points and the Balancing Problem}
%We 
Unlike other approaches to formation control, we do not (implicitly) define the desired formation by specifying absolute or relative positions of systems. Rather, our notion of a formation corresponds to when the systems in the group arrange their positions in a \emph{balanced} fashion on some pre-specified shape. More formally, we ask for $n$ systems to arrange their positions $x_{1},\dots,x_{n}$ according to the shape of a given compactly embedded submanifold $M$ of $\mathbb{R}^{m}$. %Thereby, we would prefer configurations in which the positions are evenly spaced, i.e. 
By ``balanced'', we mean the positions of the agents should be evenly spaced in the submanifold. That is, we wish to avoid situations in which two positions $x_{i}$, $x_{j}$ are close to each other. 

In this direction, one may be tempted to think that the maximization of
\begin{equation}
 \sum_{j\,>\,i}d\left(x_{i},x_{j}\right)^{2} \label{eq:oldcost}
\end{equation}
subject to $x_{i}\in M$, wherein $d\left(x_{i},x_{j}\right)$ is the length of the shortest curve (in $M$) joining $x_{i}$ and $x_{j}$, endowing $M$ with the properties of a metric space, yields such configurations. In the following example, we briefly illustrate why this approach is flawed.
\begin{example}
Let $M$ be the unit circle in $\mathbb{R}^{2}$ and consider the $n=3$ points
\begin{equation}
x_{1}=x_{2},\;\;\;x_{3}=-x_{1}, \label{eq:excircconf1}
\end{equation}
for which our function \eqref{eq:oldcost} attains the value $2\pi^{2}$. A more desirable, ``balanced'' {(also, ``splay'')}, configuration, however, would correspond to the positions 
\begin{align}
x_{2}&=\begin{bmatrix}\operatorname{cos}\left(2\pi/3\right) & -\operatorname{sin}\left(2\pi/3\right) \\ \operatorname{sin}\left(2\pi/3\right) & \hphantom{-}\operatorname{cos}\left(2\pi/3\right)\end{bmatrix}x_{1}, \label{eq:excircconf2a} \\
x_{3}&=\begin{bmatrix}\operatorname{cos}\left(2\pi/3\right) & -\operatorname{sin}\left(2\pi/3\right) \\ \operatorname{sin}\left(2\pi/3\right) & \hphantom{-}\operatorname{cos}\left(2\pi/3\right)\end{bmatrix}x_{2}, \label{eq:excircconf2b}
\end{align}
which yields the smaller value $4\pi^{2}/3$ for \eqref{eq:oldcost}. Two exemplary configurations sufficing \eqref{eq:excircconf1} and \eqref{eq:excircconf2a}-\eqref{eq:excircconf2b} are depicted left and right in Fig. \ref{fig:excircconfs}, respectively. 

\begin{figure}\centering
 \begin{tikzpicture}[scale=1.0]
 \draw (0,0) circle (1);
 \filldraw[red] (1,0) circle (.05) node[anchor=east,xshift=-.5,yshift=-.5] {$x_{3}$};
 \filldraw[red] (-1,0) circle (.05) node[anchor=west,xshift=.5,yshift=-.5] {$x_{1},x_{2}$};
 \draw[<->] (1.1,0) arc (0:180:1.1);
 \draw[white] (0,1.1) -- (0,1.1) node[anchor=south] {\color{black}$d\left(x_{1},x_{3}\right)=\pi$}; 
 \end{tikzpicture}\begin{tikzpicture}[scale=1.0]
  \draw[white] (-2.43,1.675) -- (-2.43,1.675);
 \draw (0,0) circle (1);
 \filldraw[red] (1,0) circle (.05) node[anchor=east,xshift=-.5,yshift=-.5] {$x_{1}$};
 \filldraw[red] (-.5,{+sqrt(3)/2}) circle (.05) node[anchor=north west,yshift={-.5*sqrt(3)/2},xshift=.25] {$x_{2}$};
 \filldraw[red] (-.5,{-sqrt(3)/2}) circle (.05) node[anchor=south west,yshift={.5*sqrt(3)/2},xshift=.25] {$x_{3}$};
 \draw[<->] (1.1,0) arc (0:120:1.1);
% \draw[white] ({1.1*.5},{1.1*sqrt(3)/2}) -- ({1.1*.5},{1.1*sqrt(3)/2}) node[anchor=south west] {\color{black}$d\left(x_{1},x_{2}\right)=2\pi/3$}; 
  \draw[white] (.3,1.1) -- (.3,1.1) node[anchor=south] {\color{black}$d\left(x_{1},x_{2}\right)=2\pi/3$};
 \end{tikzpicture}
\caption{Configurations on the circle satisfying \eqref{eq:excircconf1} (left) and \eqref{eq:excircconf2a}-\eqref{eq:excircconf2b} (right).}\label{fig:excircconfs}
\end{figure}
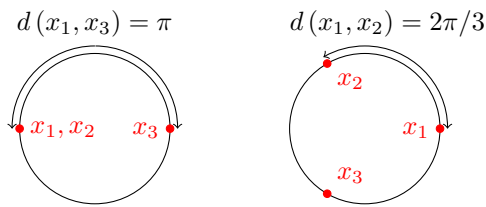

\end{example}	

To compensate for the shortcomings of \eqref{eq:oldcost}, let us seek for a function which attains very small values as any two positions $x_{i}$ and $x_{j}$ approach each other. To this end, consider
\begin{equation}
 \sum_{j\,>\,i}\operatorname{ln}\left(d\left(x_{i},x_{j}\right)\right)=\operatorname{ln}\left(\vphantom{\prod}\right.\!\prod_{j\,>\,i}d\left(x_{i},x_{j}\right)\!\!\left.\vphantom{\prod}\right), \label{eq:newcost}
\end{equation}
which now tends to $-\infty$ as any pairwise distinct points $x_{i}$, $x_{j}$ approach each other. Yet, due to strict monotonicity of the natural logarithm, we expect that the values attained for configurations such as \eqref{eq:excircconf2a}-\eqref{eq:excircconf2b} are still large. 

%One quickly finds 
It turns out that the cost function \eqref{eq:newcost} is not new to the exact sciences. For the special case of $M$ being the sphere, its maximizers are today referred to as (elliptic) \emph{Fekete points}. Thomson asked for these points while studying electronically charged particles, subject to Coulomb's law, constrained to the sphere \cite{Thomson1904}. This problem was then brought to mathematics by F\"oppl \cite{Foeppl1912} on advice of his advisor Hilbert. Later, Schur \cite{Schur1918} asked for polynomials with large discriminant and roots in the unit interval or, similarly, for large values of the Vandermonde polynomials with arguments in the unit interval, leading Fekete \cite{Fekete1923} to ask the same question for these roots / arguments constrained to arbitrary compact sets and eventually giving these points their present name. More recently, Shub and Smale \cite{Shub1993} required Fekete points as initial conditions for an algorithm {computing} zeros of (complex) homogeneous polynomials, letting Smale define their (algorithmic, in the sense of Blum-Cucker-Shub-Smale) computation as one of \emph{the} mathematical problems of our century \cite{Smale1998}.

%In the present 
Returning to our problem, we thus ask for our positions $x_{1},\dots,x_{n}$ to eventually attain such Fekete points, and if possible, in a stable fashion. Although this point of view on the formation control problem is, to our best knowledge, novel, others have presented conceptually similar definitions of desirable configurations. In sensor coverage, one steers systems to centroids of a Voronoi diagram through a continuous-time version of Lloyd's algorithm \cite{Cortes2004}. Formation shapes may thereby be taken into account via specific density functions. Circular formations can be stabilized by minimizing all angular moments \cite{Sepulchre2007} or by zeroing their centroid \cite{Scardovi2007}, which is in this context often referred to as \emph{balancing}%{\color{blue}(in contrast, Fekete points need not have their centroid at the origin, e.g. for $n=11$ and $M$ being the sphere, where the polyhedron with the Fekete points as its vertices is topologically equivalent to an edge-contracted icosahedron; neither, Fekete points must have all pairwise relative distances equal, e.g. for $n=5$ and $M$ being the sphere, where the polyhedron with the Fekete points as its vertices is topologically equivalent to a triangular dipyramid; it should further be pointed out that polyhedra with the Fekete points as their vertices are not necessarily isohedra, e.g. for $n=8$ and $M$ being the sphere, where the polyhedron with the Fekete points as its vertices is topologically equivalent to a square antiprism)}
. Formations whose shape is determined by a Jordan curve can be stabilized by choosing the desired relative distances a priori \cite{Zhang2007}. If the formation shape is a more general (homogeneous) manifold, it may still be stabilized by maximizing the pairwise chordal distances of the individual systems \cite{Sarlette2009}. The dual consensus problem has also been solved intrinsically \cite{Tron2013,Montenbruck2015a}. 
Minimizing the deviation of relative distances among agents from the lengths of the links in a rigid framework stabilizes the formation defined by that framework \cite{Olfati-Saber2002}. The weaker notion of infinitesimal rigidity proves to be sufficient for this purpose, as well \cite{Krick2009}.

A significant distinction between \cite{Sarlette2009} and \cite{Zhang2007} is that the former does not asymptotically drive the positions to the specified manifold but expects that the positions are constrained to the manifold for all times while the latter expects that the desired relative distances $d\left(x_{i},x_{j}\right)$ are specified a priori (the former does not assume to know these relative distances and the latter does allow for the positions to move in the ambient space $\mathbb{R}^{2}$ of the chosen Jordan curve). In the present paper, we allow for our positions $x_{1},\dots ,x_{n}$ to move in the ambient space $\mathbb{R}^{m}$ of some compactly embedded smooth submanifold $M$ and impose no prespecification of desired relative distances. Instead, we let the maximizers of \eqref{eq:newcost}, our Fekete points, specify the desired configurations on $M$, a point of view which is, to our knowledge, novel.

{We thus introduce and study Fekete points as a natural definition of evenly spaced formations. {This is in contrast to defining such an even spacing as a configuration with zero centroid}, all pairwise relative distances equal, or all polytopes connecting nearby points being of the same type. For instance, with $M$ being the sphere in $\mathbb{R}^{3}$, {$n=11$ yields Fekete points whose centroid is not at the origin (topologically equivalent to an edge-contracted icosahedron, left in Fig. \ref{fig:polytopes})}, $n=5$ yields Fekete points whose relative distances are not all the same (topologically equivalent to a triangular bipyramid, middle in Fig. \ref{fig:polytopes}), and $n=8$ yields Fekete points connected by both quadrilateral and triangular polygons (topologically equivalent to a square antiprism, right in Fig. \ref{fig:polytopes}).

\begin{figure}
\includegraphics[width=.16\textwidth]{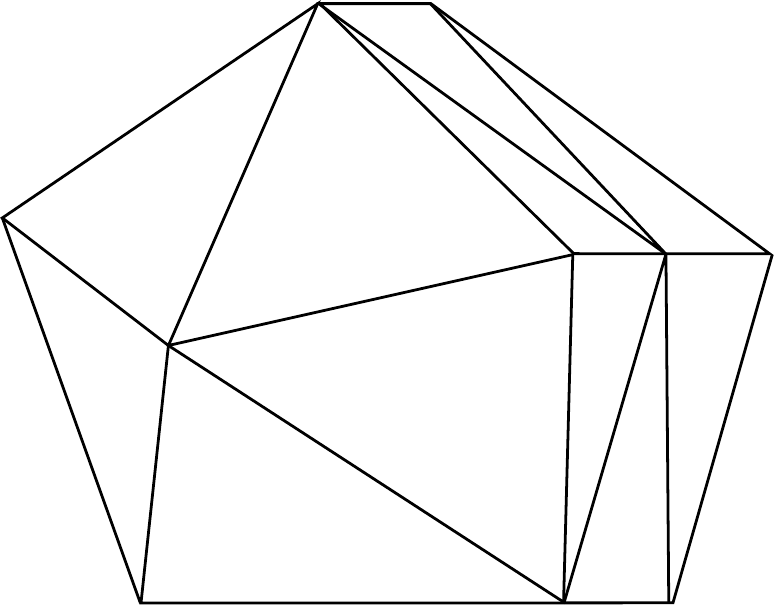}\hspace{1.5pt}\raisebox{-2pt}{\includegraphics[width=.16\textwidth]{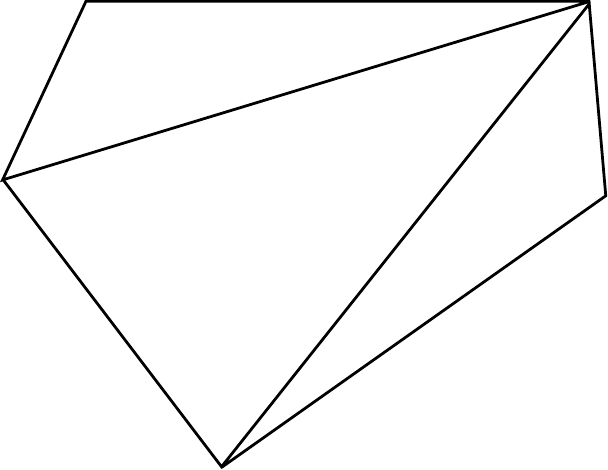}}\hspace{2pt}\includegraphics[width=.16\textwidth]{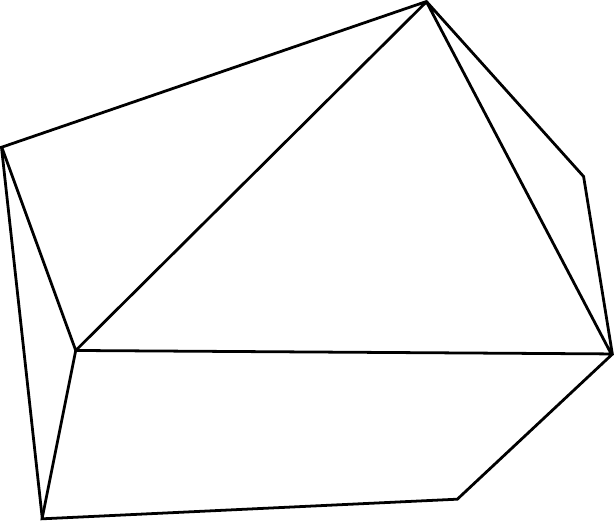}
 \caption{Polytopes constituted by Fekete points on the sphere: edge-contracted icosahedron for $n=11$ (left), triangular bipyramid for $n=5$ (middle), and square antiprism for $n=8$ (right).}\label{fig:polytopes}
\end{figure}

%The following examples (all with $M$ being the sphere in $\mathbb{R}^{3}$) illustrate this:
%\begin{itemize}
% \item {\color{red}Fekete points need not have their centroid at the origin, e.g. for $n=11$, where the polyhedron with the Fekete points as its vertices is topologically equivalent to an edge-contracted icosahedron.}
% \item Neither, Fekete points must have all pairwise relative distances equal, e.g. for $n=5$, where the polyhedron with the Fekete points as its vertices is topologically equivalent to a triangular dipyramid.
% \item Polyhedra with Fekete points as their vertices are not necessarily isohedra, e.g. for $n=8$, where the polyhedron with the Fekete points as its vertices is topologically equivalent to a square antiprism.
%\end{itemize}
}

\section{Asymptotic Stability of Fekete Points}\label{sec:control}
Let $x_{1},\dots,x_{n}$ denote the positions of our systems in $\mathbb{R}^{m}$ and let $M$ be a smooth, compactly embedded submanifold of $\mathbb{R}^{m}$. We seek algorithms which drive our positions towards maximizers of \eqref{eq:newcost}, subject to $x_{i}\in M$, in a stable fashion. For greater flexibility, we enhance the expression \eqref{eq:newcost} with scalar nonnegative weights $W_{ij}$, i.e., we consider the function
\begin{equation}
 \phi\left(x\right)=\sum_{j\,>\,i}W_{ij}\operatorname{ln}\left(d\left(x_{i},x_{j}\right)\right), \label{eq:phidef}
\end{equation}
taking members $x:=\left(x_{1},\dots,x_{n}\right)$ of the product manifold $M^{n}=M\times\cdots\times M$ (excluding the points $\Delta\subset M^{n}$ for which any projection $x\mapsto\left(x_{i},x_{j}\right)$, $i\neq j$, lies in the diagonal of $M^{2}$, or, equivalently, $i\mapsto x_{i}$ is no injection{, since $\phi$ cannot be evaluated at these points}), to the real line, where, again, $d\left(x_{i},x_{j}\right)$ denotes the length of the shortest curve in $M$ {joining} $x_{i}$ and $x_{j}$. We can associate an undirected, weighted graph with $n$ vertices to the symmetric function $\left(i,j\right)\mapsto W_{ij}$ by letting $i$ and $j$ be neighbors only if $W_{ij}=W_{ji}$ is positive and by letting $W_{ij}$ be the weight of the edge which connects them if this is the case.

In order to proceed, we require some terminology. Let $$T_{x}\left(M\times\cdots\times M\right)=T_{x_{1}}M\oplus\cdots\oplus T_{x_{n}}M$$ denote the tangent space of $M^{n}$ at $x$ and let $N_{x}M^{n}$ be the normal space (in $\mathbb{R}^{mn}$) of $M^{n}$ at $x$, defined as the orthogonal complement of $T_{x}M^{n}$ in $\mathbb{R}^{mn}$. The real vector bundle $NM^{n}=\bigsqcup_{x\in M^{n}}N_{x}M^{n}$, composed of the fibers $N_{x}M^{n}$, is called the \emph{normal bundle} of $M^{n}$. A \emph{tubular neighborhood} of $M^{n}$ is a diffeomorphic image of $NM^{n}\to\mathbb{R}^{mn}$, $\left(x,v\right)\mapsto x+v$. The tubular neighborhood theorem asserts that embedded submanifolds have tubular neighborhoods. Moreover, following the construction in \cite[chapter II, section 11]{Bredon1993}, compact embedded submanifolds have tubular neighborhoods that are sublevel sets of $x\mapsto\left\|v\right\|$ (with the previously employed notation) and we will henceforth always refer to such. Now let $U\subset\mathbb{R}^{mn}$ be such a tubular neighborhood of $M^{n}$; then $r:x+v\mapsto x$ is a smooth retraction from $U$ onto $M$. Let $\operatorname{grad}\phi$ denote the gradient vector field of our scalar field $\phi:M^{n}\setminus\Delta\to\mathbb{R}$, %$$\dot{\phi}\left(\gamma\right)=\operatorname{grad}\phi\left(\gamma\right)\cdot\dot{\gamma}$$
i.e., $\operatorname{grad}\phi$ accepts arguments $x$ from $M^{n}\setminus\Delta$ (except for those points $\operatorname{Cut}\subset M^{n}$ at which any $x_{i}$ lies in the cut loci of some $x_{j}$, as $\phi$ is not differentiable there; yet, these points only constitute a set of measure zero anyhow) and takes them to vectors in $T_{x}M^{n}$. We propose the control
\begin{equation}
 \dot{x}=r\left(x\right)-x+\operatorname{grad}\phi\left(r\left(x\right)\right) \label{eq:ODE}
\end{equation}
in order to drive our positions $x$ from some initial condition $x_{0}\in r^{-1}\left(M^{n}\setminus\left(\Delta\cup\operatorname{Cut}\right)\right)$ towards maximizers of $\phi$ on $M$, our Fekete points, in a stable fashion {(needless to say, these maximizers will thereby also depend upon the choice of the weights $W_{ij}$)}. 

By construction, our control consists of a decentralized and a distributed component. The vector field $r$ can be computed in a decentralized fashion since the $i$th entry of $r\left(x\right)$ is just the retraction of $x_{i}$ onto $M$ -- the vector field $\operatorname{grad}\phi\circ r$ can be computed in a distributed fashion as, by the chain rule, its $i$th entry (evaluated at $x$) reads
\begin{equation}
 \sum_{j=1}^{n}\frac{W_{ij}}{d\left(r\left(x_{i}\right),r\left(x_{j}\right)\right)}\,V_{ij} \label{eq:ithODE}
\end{equation}
where $\left(x_{i},x_{j}\right)\mapsto V_{ij}\in T_{r\left(x_{i}\right)}M$ is the (initial) velocity vector of the unit speed geodesic joining $r\left(x_{i}\right)$ and $r\left(x_{j}\right)$. 

One can see that $\phi$ attains its maximum on $M^{n}$ as $$\operatorname{e}^{\phi\left(x\right)}=\prod_{j\,>\,i}d\left(x_{i},x_{j}\right)^{W_{ij}}$$ is continuous, $M^{n}$ is compact, and the natural logarithm is strictly monotone. In the remainder, we denote this maximum by $\phi^{\ast}$ (implying $\phi^{\ast}\geq\phi\left(x\right)$ for any $x\in M^{n}\setminus\Delta$) and the maximizers by $X^{\ast}:=\phi^{-1}\left(\lbrace\phi^{\ast}\rbrace\right)$. 

\begin{theorem}\label{thm:main}
Let $X$ be a superlevel set of $\phi$ on which $\phi$ is regular away from the maximizers $X^{\ast}$. These maximizers constitute an asymptotically stable set of equilibria of \eqref{eq:ODE} and $r^{-1}\left(X\right)$ is a subset of their region of asymptotic stability.
\end{theorem}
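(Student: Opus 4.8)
The plan is to build a Lyapunov function from the two orthogonal pieces of the control \eqref{eq:ODE} and then close the argument with LaSalle's invariance principle, using the regularity hypothesis to identify the limit set. Throughout write $p:=r(x)\in M^{n}$ and $v:=x-r(x)$, the unique tubular decomposition of $x\in U$ into a base point and a normal vector $v\in N_{p}M^{n}$; set $g:=\operatorname{grad}\phi(p)\in T_{p}M^{n}$, and let $\Pi_{T},\Pi_{N}$ be the orthogonal projections of $\mathbb{R}^{mn}$ onto $T_{p}M^{n}$ and $N_{p}M^{n}$. Since these spaces are orthogonal complements and \eqref{eq:ODE} reads $\dot{x}=-v+g$ with $-v$ normal and $g$ tangent, the right-hand side vanishes if and only if $v=0$ and $g=0$; hence the equilibria of \eqref{eq:ODE} are exactly the critical points of $\phi$ lying on $M^{n}$, and in particular every maximizer in $X^{\ast}$ is an equilibrium.

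First I would treat the normal displacement. Because $\dot{p}=Dr(x)\dot{x}\in T_{p}M^{n}$ and the ambient space is flat, the normal part of $\dot{v}$ coincides with the normal part of $\dot{x}$, namely $\Pi_{N}\dot{v}=\Pi_{N}(-v+g)=-v$. As $v$ is itself normal, this yields the exact identity
\[
 \frac{\mathrm{d}}{\mathrm{d}t}\,\frac{1}{2}\left\|x-r(x)\right\|^{2}=\langle v,\dot{v}\rangle=\langle v,\Pi_{N}\dot{v}\rangle=-\left\|x-r(x)\right\|^{2},
\]
so the normal displacement contracts exponentially, the tube $U$ (a sublevel set of $x\mapsto\|v\|$) is forward invariant, and $M^{n}$ is an exponentially attractive invariant manifold on which \eqref{eq:ODE} restricts to the gradient ascent $\dot{x}=\operatorname{grad}\phi$.

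For the tangential behaviour I would use
\[
 V(x):=\bigl(\phi^{\ast}-\phi(r(x))\bigr)+\tfrac{1}{2}\left\|x-r(x)\right\|^{2},
\]
which is nonnegative on $r^{-1}(X)$ and vanishes precisely on $X^{\ast}$. Using $\dot{p}=g-\Pi_{T}\dot{v}$ (the tangential counterpart of the previous identity) one computes
\[
 \dot{V}=-\left\|g\right\|^{2}+\langle g,\dot{v}\rangle-\left\|x-r(x)\right\|^{2}.
\]
The hard part is the cross term $\langle g,\dot{v}\rangle=\langle g,\Pi_{T}\dot{v}\rangle$, the tangential effect of the normal fibre turning as $p$ moves, which is governed by the second fundamental form of the embedding and is linear in both $v$ and $\dot{p}$. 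Since $M^{n}$ is compact this term is bounded by $C\left\|g\right\|^{2}\left\|v\right\|$ for small $\left\|v\right\|$, so after shrinking the tube until $C\left\|v\right\|\le\tfrac{1}{2}$ throughout we obtain $\dot{V}\le-\tfrac{1}{2}\left\|g\right\|^{2}-\left\|x-r(x)\right\|^{2}\le0$, with equality only where $g=0$ and $v=0$. The same bound makes $t\mapsto\phi(r(x))$ nondecreasing, so the superlevel-set structure renders $r^{-1}(X)$ forward invariant; being contained in the tube and bounded away from $\Delta$ (where $\phi\to-\infty$), its closure is compact.

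Finally I would apply LaSalle's invariance principle on $r^{-1}(X)$: every trajectory approaches the largest invariant subset of $\{\dot{V}=0\}=\{v=0\}\cap\{g=0\}$, which is contained in the critical set of $\phi$ on $M^{n}$. By hypothesis $\phi$ is regular on $X$ away from $X^{\ast}$, so this subset is exactly $X^{\ast}$; hence $X^{\ast}$ is asymptotically stable and $r^{-1}(X)$ lies in its region of attraction. I expect the main obstacle to be precisely the curvature cross term $\langle g,\dot{v}\rangle$, which couples the decentralized and distributed components of the control and must be dominated by thinning the tube; a secondary point is verifying that trajectories avoid the cut locus $\operatorname{Cut}$ where $\phi$ fails to be differentiable, which should follow from its being a null set together with the monotonicity of $\phi\circ r$.
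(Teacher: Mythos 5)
Your proposal rests on the same orthogonal decomposition $x=r(x)+v$ that the paper uses, but it assembles the pieces in a genuinely different way. The paper treats the base-point and normal dynamics as exactly decoupled: it asserts $\dot{r}\left(x\right)=\operatorname{grad}\phi\left(r\left(x\right)\right)$ and $\dot{v}=-v$, proves separately that $r^{-1}\left(X^{\ast}\right)$ and $M^{n}$ are asymptotically stable invariant sets with basin containing $r^{-1}\left(X\right)$, and concludes by intersecting the two sets. You instead run a single Lyapunov/LaSalle argument with the composite function $V=\left(\phi^{\ast}-\phi\circ r\right)+\tfrac{1}{2}\left\|v\right\|^{2}$ and confront the tangential--normal coupling $\langle g,\dot{v}\rangle$ head on. On this point your version is the more careful one: the paper's claim that the Jacobian of $r$ is the orthogonal projection onto $T_{r\left(x\right)}M^{n}$ is exact only on $M^{n}$ itself; off the zero section one has $\dot{p}=\left(I-A_{v}\right)^{-1}g$, where $A_{v}$ is the shape operator of $M^{n}$ in the normal direction $v$, and this curvature correction is precisely the cross term you isolate. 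Likewise, your projected identity $\tfrac{\mathrm{d}}{\mathrm{d}t}\tfrac{1}{2}\left\|v\right\|^{2}=-\left\|v\right\|^{2}$ is the rigorous form of the paper's claim $\dot{v}=-v$ (which, taken literally, is also only valid on $M^{n}$, since $\dot{v}$ has a tangential component).

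The one place your argument falls short of the statement is the tube-shrinking. The theorem asserts that $r^{-1}\left(X\right)$ -- the preimage in the tubular neighborhood fixed \emph{before} the theorem -- lies in the basin, whereas your crude estimate $\left|\langle g,\dot{v}\rangle\right|\leq C\left\|g\right\|^{2}\left\|v\right\|$ forces the condition $C\left\|v\right\|\leq\tfrac{1}{2}$, so you only capture a thinned copy of $r^{-1}\left(X\right)$. This is repairable inside your own framework, with no shrinking, by computing the cross term exactly: from $\Pi_{T}\dot{v}=-A_{v}\dot{p}$ and $\dot{p}=\left(I-A_{v}\right)^{-1}g$ one obtains
\[
\dot{V}=-\left\langle g,\left(I-A_{v}\right)^{-1}g\right\rangle-\left\|v\right\|^{2},
\]
and $I-A_{v}$ is invertible throughout any tubular neighborhood (invertibility of the differential of $\left(p,v\right)\mapsto p+v$ is exactly what makes the tube a diffeomorphic image), is symmetric, and equals the identity on the zero section; by continuity of its eigenvalues along the fiber segment from $0$ to $v$ it is therefore positive definite on the whole tube. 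Hence $\dot{V}\leq 0$ on all of $r^{-1}\left(X\right)$, vanishing only where $g=0$ and $v=0$, and your LaSalle step then delivers the full claim. Finally, your closing worry about $\operatorname{Cut}$ needs no null-set argument: regularity of $\phi$ on $X\setminus X^{\ast}$ presupposes differentiability there, and the base point remains in the forward-invariant set $X$, so trajectories started in $r^{-1}\left(X\right)$ never meet the cut locus away from $X^{\ast}$.
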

\begin{proof}
We prove our claim as follows: first, we show that $r^{-1}\left(X^{\ast}\right)$ is an asymptotically stable invariant set by evaluating the evolution of $r\left(x\right)$ along solutions of \eqref{eq:ODE}. Second, we study the differential equation under which $v\left(x\right):=x-r\left(x\right)$ evolves to find that $M^{n}$ is also an asymptotically stable invariant set. The proof concludes by recalling that intersections of asymptotically stable invariant sets are themselves asymptotically stable.

We first analyze how $r\left(x\right)$ evolves under \eqref{eq:ODE}. The Jacobian of $r$, evaluated at $x$, is just the projection matrix of the projection onto $T_{r\left(x\right)}M^{n}$. Keeping in mind that $r\left(x\right)-x$ is a vector from the normal space of $M^{n}$ at $x$, it follows that $r\left(x\right)$ obeys the differential equation
 \begin{equation}
  \dot{r}\left(x\right)=\operatorname{grad}\phi\left(r\left(x\right)\right). \label{eq:rODE}
 \end{equation}
As our maximizers are critical points of $\phi$, they are also equilibria of \eqref{eq:rODE}. We note that $\phi^{\ast}-\phi\left(x\right)$ is positive away from the maximizers per definition. Further, since $\phi$ is regular on $X\setminus X^{\ast}$, the Lie derivative of $-\phi$ along $\operatorname{grad}\phi$ is negative on that set, whence the maximizers are indeed an asymptotically stable set of equilibria of \eqref{eq:rODE} by virtue of Lyapunov's direct method. With the aforementioned Lie derivative being nonpositive on $X$ and $X$ being a sublevel set of $-\phi$, $X$ remains an invariant set of \eqref{eq:rODE}. As $M$ is compact, $X$ is compact and thus it belongs to the region of asymptotic stability of $X^{\ast}$ by LaSalle's invariance principle. It follows that $r^{-1}\left(X^{\ast}\right)$ is an asymptotically stable invariant set of \eqref{eq:ODE} whose region of attraction is at least $r^{-1}\left(X\right)$.

Now we turn our attention to the evolution of $v\left(x\right)$ along solutions of \eqref{eq:ODE}. As the Jacobian of $v$, evaluated at $x$, is just the projection matrix of the projection onto $N_{r\left(x\right)}M^{n}$ and $\operatorname{grad}\phi\left(r\left(x\right)\right)$ is always tangent to $M^{n}$ at $r\left(x\right)$, we come to the conclusion that $\dot{v}\left(x\right)=-v\left(x\right)$. Therefore, all solutions of \eqref{eq:ODE} initialized in the invariant set $r^{-1}\left(X\right)$ approach $v^{-1}\left(\lbrace 0\rbrace\right)=M^{n}$ asymptotically in a stable fashion, i.e., $M^{n}$ is an asymptotically stable invariant set of \eqref{eq:ODE} whose region of asymptotic stability is at least $r^{-1}\left(X\right)$  (cf. \cite[proof of Theorem 2]{Montenbruck2016}).

Bearing in mind that intersections of asymptotically stable invariant sets are asymptotically stable, we find that $r^{-1}\left(X^{\ast}\right)\cap M^{n}=X^{\ast}$ is an asymptotically stable set of equilibria of \eqref{eq:ODE}. Its region of asymptotic stability is at least the intersection of the two regions of asymptotic stability, that is $r^{-1}\left(X\right)$, completing the proof.
\end{proof}

In the proof, the preimages of $r$ appeared frequently. In particular, we were unable to extend the region of asymptotic stability of our maximizers beyond tubular neighborhoods. On a conceptual level, this agrees with the obstructions to global stabilization of certain formations observed in \cite{Belabbas2013}. 

{If we only ask our tubular neighborhood to be a diffeomorphic image of $NM^{n}\to\mathbb{R}^{mn}$, $\left(x,v\right)\mapsto x+v$, but not necessarily a sublevel set of $x\mapsto\left\|v\left(x\right)\right\|$, then it will be possible to also apply our control to positions $x$ outside those sublevel sets, but we would not be able to provide convergence guarantees for solutions initialized with such positions.  }

\section{Tutorial Examples: The Circle and the Sphere}\label{sec:circlsph}
Our control \eqref{eq:ODE} is rather general but also quite abstract. It is instructive to see how the involved expressions read for particular manifolds. In this section, we compute the right-hand side of \eqref{eq:ODE} explicitly for the circle (embedded in the plane) and for the sphere (embedded in $\mathbb{R}^{3}$).

Circular formations are among the most relevant formations in the plane $\mathbb{R}^{2}$ and have been extensively studied, e.g., in \cite{Sepulchre2007,Scardovi2007}. One reason for the relevance of circular formations is that they can be continuously deformed to other Jordan curves \cite{Zhang2007}, thus making methods which were initially developed for the circle applicable to a broad range of planar formations. In the following, we compute our control \eqref{eq:ODE} for $M$ being the (unit) circle and for $M$ being an ellipse, both embedded in the plane.
\begin{example}\label{ex:circ}
Let $M$ be the unit circle in $\mathbb{R}^{2}$. The retraction of some point $x_{i}$ from the tubular neighborhood of the circle on which $0<\left\|x_{i}\right\|<2$ is just the normalized vector 
\begin{equation}
 r\left(x_{i}\right)=\frac{1}{\left\|x_{i}\right\|}x_{i}. \label{eq:circretr}
\end{equation}
{Here, it is possible to retract any vector from the punctured plane $\mathbb{R}^{2}\setminus\lbrace 0\rbrace$ onto the circle, thus allowing us to also apply our control to positions $x_{i}$ outside our tubular neighborhood (though not having convergence guarantees for solutions initialized with such positions). }
It %thus 
remains to compute the gradient of $\phi$. For this purpose, we employ the (Lie) group isomorphism
 $$\begin{bmatrix}\operatorname{cos}\left(\alpha\right) \\ \operatorname{sin}\left(\alpha\right)\end{bmatrix}\mapsto\begin{bmatrix}\operatorname{cos}\left(\alpha\right) & -\operatorname{sin}\left(\alpha\right) \\ \operatorname{sin}\left(\alpha\right) & \hphantom{-}\operatorname{cos}\left(\alpha\right)\end{bmatrix}$$
 from the circle onto the special orthogonal group $\operatorname{SO}\left(2\right)$. This representation is quite convenient as tangent vectors become skew-symmetric matrices which, in turn, become tangent vectors of the circle again by multiplying them with points on the circle (from the right). Specifically, geodesics on $\operatorname{SO}\left(2\right)$ (and their velocity vectors) can through this reasoning be employed to compute geodesics on the circle. Employing the notation $\left(x_{i},x_{j}\right)\mapsto V_{ij}$ from \eqref{eq:ithODE}, we find that 
 \begin{equation*}
  -d\left(x_{i},x_{j}\right)V_{ij}=\operatorname{log}\!\left(\frac{1}{\left\|x_{i}\right\|\!\left\|x_{j}\right\|}{\fontsize{10pt}{1em}\selectfont\begin{bmatrix} x_{i}\cdot x_{j}\! & \!x_{i}\cdot\Omega x_{j} \\ x_{j}\cdot\Omega x_{i}\! & \!x_{i}\cdot x_{j}\end{bmatrix}}\right)\!\frac{x_{i}}{\left\|x_{i}\right\|}
 \end{equation*}
 wherein ``$\cdot$'' denotes the scalar product and $\Omega$ is the infinitesimal generator
 \begin{equation}
  \Omega:=\begin{bmatrix}\hphantom{-}0 & 1 \\ -1 & 0\end{bmatrix}
 \end{equation}
of the Lie algebra $\mathfrak{so}\left(2\right)$ and $\operatorname{log}:\operatorname{SO}\left(2\right)\to\mathfrak{so}\left(2\right)$ is the logarithmic map. Dividing by $d\left(x_{i},x_{j}\right)$ twice can be efficiently realized by applying the identity $\Omega^{-1}=-\Omega$ and finally reveals that \eqref{eq:ithODE} reads
\begin{align}
 \dot{x}_{i}&=\left(\frac{1-\left\|x_{i}\right\|}{\left\|x_{i}\right\|}\right)x_{i} \label{eq:circODE} \\
 &\hspace{1pt}+\sum_{j=1}^{n}\frac{W_{ij}}{\left\|x_{i}\right\|}\!\left(\!\operatorname{log}\!\left(\frac{1}{\left\|x_{i}\right\|\!\left\|x_{j}\right\|}{\fontsize{10pt}{1em}\selectfont\begin{bmatrix}x_{i}\cdot x_{j}\! & \!x_{i}\cdot\Omega x_{j} \\ x_{j}\cdot\Omega x_{i}\! & \!x_{i}\cdot x_{j}\end{bmatrix}}\right)\!\right)^{-1}\!\!\!x_{i} \nonumber
\end{align}
for the present example. We now consider $n=10$ systems coupled through the unweighted cycle graph $C_{10}$, i.e., $W_{ij}=W_{ji}=1$ for $j=\left(i+1\right)\operatorname{mod}10$ and $W_{ij}=W_{ji}=0$ otherwise; the graph is depicted in Fig. \ref{fig:C10}. With this choice of graph, we solved \eqref{eq:circODE} numerically for some initial condition; the numerical solutions are plotted in Fig. \ref{fig:Efig1}. The initial condition is indicated by blue circles (\raisebox{1pt}{\tikz{\filldraw[blue] (0,0) circle (.06);}}) and the limiting point is marked with red circles (\raisebox{1pt}{\tikz{\filldraw[red] (0,0) circle (.06);}}). {Although the initial conditions where chosen outside our tubular neighborhood, w}e find that the positions approach an evenly spaced configuration on the circle, as desired.
\begin{figure}
\centering
\begin{tikzpicture}[scale=1.2]
\draw (0,0) circle (1);
 \draw[fill=white,rotate=0] (1,0) circle (.225) node {$1$};
  \draw[fill=white,rotate=36] (1,0) circle (.225) node {$2$};  
  \draw[fill=white,rotate=2*36] (1,0) circle (.225) node {$3$};
  \draw[fill=white,rotate=3*36] (1,0) circle (.225) node {$4$};
  \draw[fill=white,rotate=4*36] (1,0) circle (.225) node {$5$};
  \draw[fill=white,rotate=5*36] (1,0) circle (.225) node {$6$};
  \draw[fill=white,rotate=6*36] (1,0) circle (.225) node {$7$};
  \draw[fill=white,rotate=7*36] (1,0) circle (.225) node {$8$};
  \draw[fill=white,rotate=8*36] (1,0) circle (.225) node {$9$};
  \draw[fill=white,rotate=9*36] (1,0) circle (.225) node {$10$};
\end{tikzpicture}
\caption{Cycle graph $C_{10}$.}\label{fig:C10}
\end{figure}
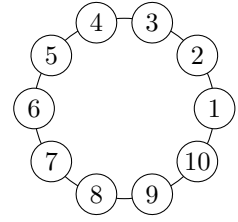
\begin{figure}
\centering
\input{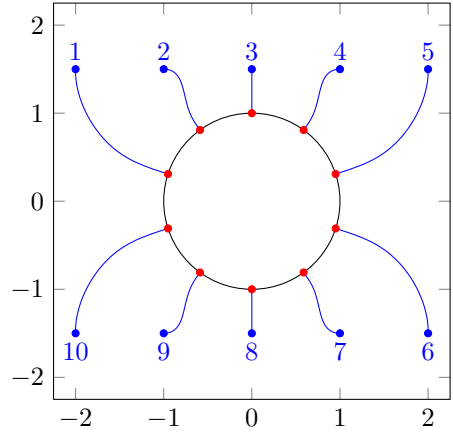}\hphantom{-1-1-!}
\caption{Numerical solution of \eqref{eq:circODE} for the cycle graph $C_{10}$.}\label{fig:Efig1}
\end{figure}
\end{example}

The circle can be continuously deformed into any Jordan curve, making the control from the foregoing example applicable to a wide range of formations in the plane. For $M$ being an ellipse, the way in which our control must be adapted is particularly simple, as we briefly describe in the next example.

\begin{example}\label{ex:ellipse}
 Let $M$ be an ellipse in $\mathbb{R}^{2}$ with radius $a$ in the first coordinate and radius $1$ in the second coordinate. As in the previous example, points from the ellipse can be injected onto the special orthogonal group $\operatorname{SO}\left(2\right)$ via
\begin{equation}
 \begin{bmatrix}a\operatorname{cos}\left(\alpha\right) \\ \hphantom{a}\operatorname{sin}\left(\alpha\right)\end{bmatrix}\mapsto\begin{bmatrix}\operatorname{cos}\left(\alpha\right) & -\operatorname{sin}\left(\alpha\right) \\ \operatorname{sin}\left(\alpha\right) & \hphantom{-}\operatorname{cos}\left(\alpha\right)\end{bmatrix}
\end{equation}
and thus all considerations regarding the circle remain correct. Using this representation, the only changes that are required in \eqref{eq:circODE} are that arguments of norms must be 
\begin{equation}
\begin{bmatrix}1/a & 0 \\ 0 & 1\end{bmatrix}x_{i}\;\;\;\text{ and }\;\;\;\begin{bmatrix}1/a & 0 \\ 0 & 1\end{bmatrix}x_{j}
\end{equation}
instead of $x_{i}$ and $x_{j}$, respectively. This has the effect of retracting points onto the ellipse instead of the circle. Also, the inverse of the skew-symmetric matrix that the logarithmic map returns must undergo the similarity transform of being multiplied by
\begin{equation}
\begin{bmatrix}1 & 0 \\ 0 & 1/a\end{bmatrix}\;\;\;\text{and}\;\;\;\begin{bmatrix}1 & 0 \\ 0 & a\end{bmatrix}
\end{equation}
from the left and right, respectively, before multiplying it with $x_{i}$. This has the purpose of making the resulting vector tangent to the ellipse instead of the circle. We now consider $n=12$ systems coupled through the unweighted cycle graph $C_{12}$%, i.e. $W_{ij}=W_{ji}=1$ for $j=\left(i+1\right)\operatorname{mod}12$ and $W_{ij}=W_{ji}=0$ otherwise%; the graph is depicted in Fig. \ref{fig:C12}
. With this choice of graph, we solved \eqref{eq:circODE}, modulo the above-mentioned substitutions, numerically for some initial condition; the numerical solutions are plotted in Fig. \ref{fig:Efig3}. As before, the initial condition is indicated by blue circles (\raisebox{1pt}{\tikz{\filldraw[blue] (0,0) circle (.06);}}) and the limiting point is marked with red circles (\raisebox{1pt}{\tikz{\filldraw[red] (0,0) circle (.06);}}). We find that the positions approach an evenly spaced configuration on the ellipse, as desired.
%\begin{figure}
%\centering
%\input{Efig3_graph}
%\caption{Cycle graph $C_{12}$}\label{fig:C12}
%\end{figure}
\begin{figure}
\centering
\input{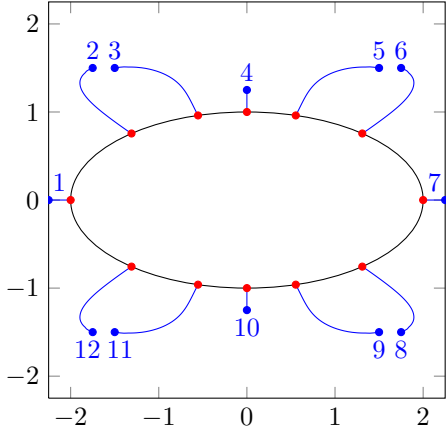}\hphantom{-1-1-!}
\caption{Numerical solution of \eqref{eq:circODE}, modulo the substitutions from Example \ref{ex:ellipse}, for the cycle graph $C_{12}$.}\label{fig:Efig3}
\end{figure}
\end{example}

Our methods proved to be successful for two exemplary formations in the plane $\mathbb{R}^{2}$. While the interest in planar formations largely stems from vehicle platoons or robot swarms, interest in spatial formations in $\mathbb{R}^{3}$ is readily justified, e.g. by formation flights. The simplest compactly and smoothly embedded submanifold of interest in $\mathbb{R}^{3}$ should be the ($2$-)sphere. This shall be reason enough to consider a spherical formation in $\mathbb{R}^{3}$ in the forthcoming example. We also recall that the classical Thomson and Fekete problems are posed as finding evenly distributed points on the sphere in $\mathbb{R}^{3}$. 

\begin{example}\label{ex:sphere}
Let $M$ be the unit sphere in $\mathbb{R}^{3}$. Most considerations from Example \ref{ex:circ} remain correct, though the notation does not remain as simple. More specifically, the representation
\begin{align}
&\begin{bmatrix}\operatorname{sin}\left(\alpha^{2}\right)\operatorname{cos}\left(\alpha^{1}\right) \\ \operatorname{sin}\left(\alpha^{2}\right)\operatorname{sin}\left(\alpha^{1}\right) \\ \operatorname{cos}\left(\alpha^{2}\right)\end{bmatrix}\mapsto \label{eq:sphereinj} \\  & \hphantom{XXX}\begin{bmatrix}\operatorname{cos}\left(\alpha^{2}\right)\operatorname{cos}\left(\alpha^{1}\right) & -\operatorname{sin}\left(\alpha^{1}\right) & \operatorname{sin}\left(\alpha^{2}\right)\operatorname{cos}\left(\alpha^{1}\right) \\ \operatorname{cos}\left(\alpha^{2}\right)\operatorname{sin}\left(\alpha^{1}\right) & \hphantom{-}\operatorname{cos}\left(\alpha^{1}\right) & \operatorname{sin}\left(\alpha^{2}\right)\operatorname{sin}\left(\alpha^{1}\right) \\ -\operatorname{sin}\left(\alpha^{2}\right) & 0 & \operatorname{cos}\left(\alpha^{2}\right)\end{bmatrix} \nonumber
\end{align}
of the sphere in $\operatorname{SO}\left(3\right)$ is an injection that does not attain every value in $\operatorname{SO}\left(3\right)$. The retraction \eqref{eq:circretr} remains the same and we denote the above representation \eqref{eq:sphereinj} of some retracted $x_{i}$ as a member of $\operatorname{SO}\left(3\right)$ by $R_{i}$. Using this notation, we may still apply the logarithmic map $\operatorname{log}:\operatorname{SO}\left(3\right)\to\mathfrak{so}\left(3\right)$ to $R_{j}^{\top}R_{i}$ in order to find the (initial) velocity vector of the geodesic joining $R_{i}$ and $R_{j}$ but now that the tangent space is not one-dimensional, that velocity vector may not be inverted. Instead, using the identity $2d\left(x_{i},x_{j}\right)^{2}=-\operatorname{tr}\hspace{-.3pt}\left(\vphantom{x_{j}^{2}}\!\hspace{-.3pt}\right.\operatorname{log}\left(\vphantom{x_{j}}\!\hspace{-1.3pt}\right.R_{j}^{\top}R_{i}\!\hspace{-1pt}\left.\vphantom{x_{i}}\right)^{2}\!\hspace{-.5pt}\left.\vphantom{x_{i}^{2}}\right)$, we obtain
\begin{align}
 \dot{x}_{i}&=\left(\frac{1-\left\|x_{i}\right\|}{\left\|x_{i}\right\|}\right)x_{i} \nonumber \\
 &\hspace{1pt}+\sum_{j=1}^{n}\frac{2W_{ij}}{\left\|x_{i}\right\|\operatorname{tr}\hspace{-.3pt}\left(\vphantom{x_{j}^{2}}\!\hspace{-.3pt}\right.\operatorname{log}\left(\vphantom{x_{j}}\!\hspace{-1.3pt}\right.R_{j}^{\top}R_{i}\!\hspace{-1pt}\left.\vphantom{x_{i}}\right)^{2}\!\hspace{-.5pt}\left.\vphantom{x_{i}^{2}}\right)}\operatorname{log}\left(R_{j}^{\top}R_{i}\right)x_{i}. \label{eq:sphereODE}
\end{align}
Now consider $n=5$ systems coupled through the unweighted complete graph $K_{5}$, i.e., $W_{ij}=W_{ji}=1$ for $j\neq i$; the graph is depicted in Fig. \ref{fig:K5}. With this choice of graph, we solved \eqref{eq:sphereODE} numerically for some initial condition; the numerical solutions are plotted in Fig. \ref{fig:S2fig}. The initial condition is indicated by blue circles (\raisebox{1pt}{\tikz{\filldraw[blue] (0,0) circle (.06);}}) and the limiting point is marked with red circles (\raisebox{1pt}{\tikz{\filldraw[red] (0,0) circle (.06);}}). We find that the positions approach the vertices of a triangular bipyramid, which are indeed known to be Fekete points.

\begin{figure}
\centering
\begin{tikzpicture}[scale=1.2]
\draw (0,0) circle (1);
\draw (1,0) -- (-1,.666);
\draw (1,0) -- (-1,-.666);
\draw (.333,1) -- (-1,-.666);
\draw (.333,1) -- (.333,-1);
\draw (.333,-1) -- (-1,.666);
\draw[fill=white,rotate=0*72] (1,0) circle (.225) node {$1$};
  \draw[fill=white,rotate=1*72] (1,0) circle (.225) node {$2$}; 
  \draw[fill=white,rotate=2*72] (1,0) circle (.225) node {$3$};
  \draw[fill=white,rotate=3*72] (1,0) circle (.225) node {$4$};
  \draw[fill=white,rotate=4*72] (1,0) circle (.225) node {$5$};
  \end{tikzpicture}
\caption{Complete graph $K_{5}$.}\label{fig:K5}
\end{figure}
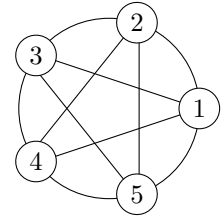
\begin{figure}
\centering
\input{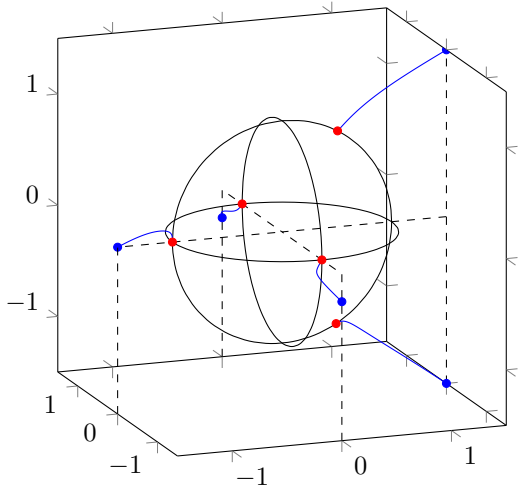}\hphantom{-1-1-!}
\caption{Numerical solution of \eqref{eq:sphereODE} for the complete graph $K_{5}$.}\label{fig:S2fig}
\end{figure}

\end{example}

\section{Graph Theoretical Interpretation \\ of Equilibria}\label{sec:graph}
In the maximization of $\phi$, the parameters $W_{ij}$, which can be interpreted as being determined by a weighted, undirected graph, play a crucial role. In the previous section, we saw that the cycle graph was well suited for evenly spacing points on the circle (Example \ref{ex:circ}), and that the complete graph brought positions to the Fekete points on the sphere (Example \ref{ex:sphere}). It shall be emphasized that the cycle graph does not bring the positions towards the Fekete points on the sphere and that we also encounter difficulties when employing the complete graph on the circle, as illustrated in our next example.
\begin{example}\label{ex:graphs}
Let $M$ be the unit circle in $\mathbb{R}^{2}$ and consider $n=6$ systems coupled through the complete graph $K_{6}$. Solving \eqref{eq:circODE} numerically, one finds that oscillations occur {that} grow stronger as the positions approach the circle. The numerical solutions are plotted in Fig. \ref{fig:Efig_complete} with initial condition indicated by blue circles (\raisebox{1pt}{\tikz{\filldraw[blue] (0,0) circle (.06);}}) and the configuration for some large time is marked with red circles (\raisebox{1pt}{\tikz{\filldraw[red] (0,0) circle (.06);}}). The oscillations are magnified for better visibility.
\begin{figure}
\centering
\includegraphics{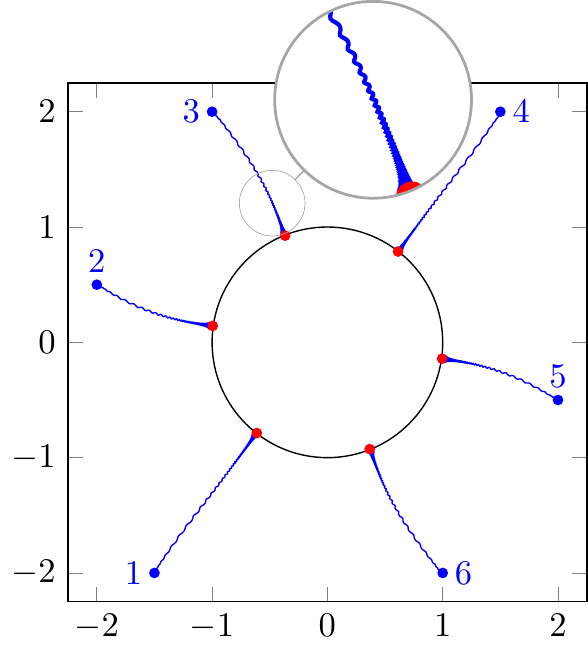}\hphantom{-1-1-!}
\caption{Numerical solution of \eqref{eq:circODE} for the complete graph $K_{6}$.}\label{fig:Efig_complete}
\end{figure}
This behavior is explained by verifying whether the evenly spaced configuration is an equilibrium of \eqref{eq:circODE}. In fact, introducing the notation 
\begin{equation}
\alpha_{ij}\Omega =\operatorname{log}\left(\begin{bmatrix}x_{i}\cdot x_{j} & x_{i}\cdot\Omega x_{j} \\ x_{j}\cdot\Omega x_{i} & x_{i}\cdot x_{j}\end{bmatrix}\right) \label{eq:angledef}
\end{equation}
for the directed angle between two points $x_{i}$, $x_{j}$ on the circle {that} are neighbors in the graph under consideration (i.e., for which $W_{ij}\neq 0$), we figure that
\begin{equation*}
 \alpha_{12}=-\frac{2\pi}{6},\hspace{1pt}\alpha_{13}=-\frac{2\pi}{3},\hspace{1pt}\alpha_{14}=\pm\pi,\hspace{1pt}\alpha_{15}=\frac{2\pi}{3},\hspace{1pt}\alpha_{16}=\frac{2\pi}{6}
\end{equation*}
should asymptotically hold from Fig. \ref{fig:Efig_complete}. If one now asks whether this configuration is indeed an equilibrium of \eqref{eq:circODE}, then one finds that
\begin{equation}
\frac{1}{\alpha_{12}}+\frac{1}{\alpha_{13}}+\frac{1}{\alpha_{14}}+\frac{1}{\alpha_{15}}+\frac{1}{\alpha_{16}}\neq 0
\end{equation}
whence the answer is negative. At the same time, one finds that removal of the edge between the vertices $1$ and $4$ would indeed turn this point into an equilibrium of $\dot{x}_{1}$, and similarly we would have to remove the edges $\left(2,5\right)$ and $\left(3,6\right)$ in order to establish an equilibrium for all positions. Doing so, we arrive at a $4$-regular graph (with $6$ vertices) and solving \eqref{eq:circODE} again for this graph, we find that the oscillations observed before no longer occur. One is thus tempted to think that \emph{regular} graphs are suited best for our evenly spaced configurations on the circle, particular when recalling that circulant graphs play a crucial role in \cite{Sepulchre2008} for stabilization of circular formations. However, consider the Thomsen (``utility'') graph depicted in Fig. \ref{fig:util}, which is $3$-regular (and also complete bipartite). Solving \eqref{eq:circODE} numerically for this graph and plotting the numerical solutions in Fig.\ {\ref{fig:Efig_UTILITY}} (initial condition again indicated by blue circles (\raisebox{1pt}{\tikz{\filldraw[blue] (0,0) circle (.06);}}) and configuration for some large time marked with red circles (\raisebox{1pt}{\tikz{\filldraw[red] (0,0) circle (.06);}})), we find that the positions do not come to rest. Instead, the positions enter a periodic orbit on the circle whilst being evenly spaced thereon. Let us try to explain this as we did above. To this end, first notice that
\begin{equation}
 \alpha_{14}=\pm\pi,\;\;\;\alpha_{15}=\frac{2\pi}{6},\;\;\;\alpha_{16}=-\frac{2\pi}{6}
\end{equation}
should asymptotically hold (again inferred from the plot). But as the reciprocals thereof do not sum up to zero, this configuration, again, does not constitute an equilibrium of \eqref{eq:circODE}. We would have to delete the edges $\left(1,4\right)$, $\left(2,5\right)$, and $\left(3,6\right)$ to let this happen. If we removed these edges, we again arrived at the cycle graph $C_{6}$, for which the points indeed come to rest at an evenly distributed configuration (as we saw for $n=10$ in Example \ref{ex:circ}). In conclusion, we find that $k$-regular graphs, with $k$ an even positive number, are suited well for evenly spaced circular formations. These graphs are precisely the regular graphs possessing Eulerian cycles.
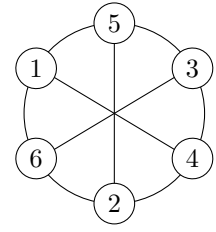
\begin{figure}
\centering
\begin{tikzpicture}[scale=1.2]
\draw (0,0) circle (1);
\draw (0,1) -- (0,-1);
\draw (-1,.6) -- (1,-.6);
\draw (-1,-.6) -- (1,.6);
\draw[fill=white,rotate=90+60] (1,0) circle (.225) node {$1$};
  \draw[fill=white,rotate=90] (1,0) circle (.225) node {$5$}; 
  \draw[fill=white,rotate=30] (1,0) circle (.225) node {$3$};
  \draw[fill=white,rotate=-30] (1,0) circle (.225) node {$4$};
  \draw[fill=white,rotate=-30-60] (1,0) circle (.225) node {$2$};
  \draw[fill=white,rotate=-30-120] (1,0) circle (.225) node {$6$};
 
  \end{tikzpicture}
\caption{Thomsen (``utility'') graph.}\label{fig:util}
\end{figure}
\begin{figure}
\centering
\input{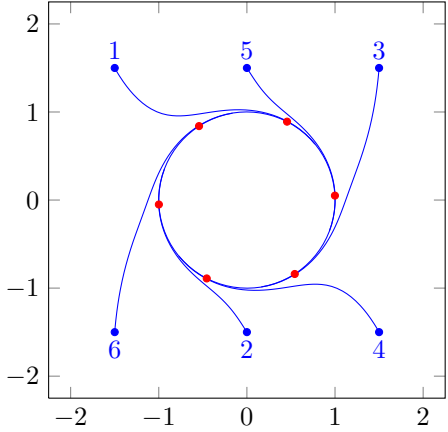}\hphantom{-1-1-!}
\caption{Numerical solution of \eqref{eq:circODE} for the Thomsen graph.}\label{fig:Efig_UTILITY}
\end{figure}
\end{example}

The previous example provided some insight into the the role of graph theory for equilibria of \eqref{eq:circODE}. Next, we generalize these observations. To this end, we adopt the notation \eqref{eq:angledef}. Equating the right hand side of \eqref{eq:circODE} with zero, we arrive at all $x_{i}$ being on the circle and the angles $\alpha_{ij}$ satisfying 
\begin{equation*}
{\fontsize{9pt}{1em}\selectfont\begin{bmatrix}0 & W_{12}/\alpha_{12} & \cdots & \cdots & W_{1n}/\alpha_{1n} \\ W_{21}/\alpha_{21} & 0 & W_{23}/\alpha_{23} & \cdots & W_{2n}/\alpha_{2n} \\ \vdots & W_{32}/\alpha_{32} & 0 & & \vdots \\ \vdots & \vdots & & \ddots &  \\ W_{n1}/\alpha_{n1} & W_{n2}/\alpha_{n2} & \cdots &  & 0\end{bmatrix}\!\!\begin{bmatrix}\\[-.9em] 1 \\[-.5em] \vdots \\[-.25em] \vdots \\[-.25em] \vdots \\ 1\end{bmatrix}}\!=0.
 \end{equation*}
But we defined the function $\left(i,j\right)\mapsto W_{ij}$ to be symmetric (i.e., our graph to be undirected). Knowing that the logarithm of the transpose of a rotation matrix is just the negative logarithm of that rotation matrix, we further find that $\alpha_{ij}=-\alpha_{ji}$. Substituting these findings into our last equation, we find that  
\begin{equation*}
{\fontsize{9pt}{1em}\selectfont\begin{bmatrix}\hphantom{-}0\! & \hphantom{-}W_{12}/\alpha_{12}\! & \cdots\! & \cdots\! & W_{1n}/\alpha_{1n} \\ -W_{12}/\alpha_{12}\! & 0\! & W_{23}/\alpha_{23}\! & \cdots\! & W_{2n}/\alpha_{2n} \\ \hphantom{-}\vdots\! & -W_{23}/\alpha_{23}\! & 0\! & \! & \vdots \\ \hphantom{-}\vdots\! & \vdots\! & \! & \ddots\! &  \\ -W_{1n}/\alpha_{1n}\! & -W_{2n}/\alpha_{2n}\! & \cdots\! & \! & 0\end{bmatrix}\!\!\begin{bmatrix}\\[-.9em] 1 \\[-.5em] \vdots \\[-.25em] \vdots \\[-.25em] \vdots \\ 1\end{bmatrix}}\!=0
\end{equation*}
must hold for a configuration to make an equilibrium. Next, noticing that the matrix on the left is skew-symmetric, we know that it can be written as a linear combination of the generators $\Omega_{ij}$ of the Lie algebra $\mathfrak{so}\left(n\right)$, where we employ the convention that the $j$th entry of the $i$th row of $\Omega_{ij}$ is $1${, i.e., $\Omega_{ij}=e_{i}e_{j}^{\top}-e_{j}e_{i}^{\top}$}. Thus, we have that
\begin{equation}
 \sum_{j>i}\frac{W_{ij}}{\alpha_{ij}}\Omega_{ij}\begin{bmatrix}1 \\ \vdots \\ 1\end{bmatrix}= \sum_{j>i}\frac{W_{ij}}{\alpha_{ij}}\left(e_{i}-e_{j}\right)=0 \label{eq:skewsum}
\end{equation}
must hold at an equilibrium, where $e_{i}\in\mathbb{R}^{n}$ denotes the $i$th vector of the standard basis. Letting $E$ denote the weighted incidence matrix of our graph, i.e., the matrix whose columns are the nonzero vectors $W_{ij}\left(e_{i}-e_{j}\right)$, with lexicographically ordered indices $\left(i,j\right)$, $j>i$, we are now ready to state the following proposition.
\begin{proposition}\label{prop:incidence}
Let all $x_{i}$ lie on the circle. Denote by $\alpha$ the vector whose entries are $1/\alpha_{ij}$, with $\alpha_{ij}$ defined as in \eqref{eq:angledef}, and lexicographically ordered indices $\left(i,j\right)$, $j>i$. Denote the weighted incidence matrix of the undirected, weighted graph associated to the symmetric function $\left(i,j\right)\mapsto W_{ij}$ by $E$. Then $x$ is an equilibrium of \eqref{eq:circODE} if and only if $\alpha$ is in the nullspace of $E$.
\end{proposition}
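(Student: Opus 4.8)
The plan is to establish the biconditional by unwinding the fixed-point condition of \eqref{eq:circODE} and matching its final form, term for term, with the product $E\alpha$. Since the proposition already assumes that every $x_i$ lies on the circle, the analytic content has in fact been carried out in the derivation of \eqref{eq:skewsum}; what remains is to arrange that derivation as a chain of \emph{equivalences} and to recognize its last line as the statement $E\alpha = 0$.

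First I would set the right-hand side of \eqref{eq:circODE} to zero. With $\left\|x_i\right\| = 1$ for every $i$, the decentralized term $\left((1-\left\|x_i\right\|)/\left\|x_i\right\|\right)x_i$ vanishes identically, so $x$ is an equilibrium if and only if the distributed term vanishes for each $i$, i.e. $\sum_{j=1}^{n} W_{ij}\left(\operatorname{log}(\cdots)\right)^{-1}x_i = 0$. Using the angle definition \eqref{eq:angledef}, namely $\operatorname{log}(\cdots) = \alpha_{ij}\Omega$, together with the identity $\Omega^{-1} = -\Omega$, the $i$th equation becomes $-\left(\sum_{j=1}^{n} W_{ij}/\alpha_{ij}\right)\Omega x_i = 0$. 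The vector $\Omega x_i$ is tangent to the circle and nonzero (as $\Omega$ is invertible and $x_i \neq 0$), so this vector equation is equivalent to the scalar condition $\sum_{j=1}^{n} W_{ij}/\alpha_{ij} = 0$.

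Next I would assemble these $n$ scalar conditions into the single identity $M\mathbf{1} = 0$, where $M$ carries the entries $W_{ij}/\alpha_{ij}$ off the diagonal and zeros on it, and invoke $W_{ij} = W_{ji}$ together with $\alpha_{ij} = -\alpha_{ji}$ (which holds because the logarithm of the transpose of a rotation is the negative logarithm) to conclude that $M$ is skew-symmetric. Expanding $M$ in the generators $\Omega_{ij} = e_ie_j^{\top} - e_je_i^{\top}$ and using $\Omega_{ij}\mathbf{1} = e_i - e_j$ reproduces exactly \eqref{eq:skewsum}. Finally, by the definition of the weighted incidence matrix $E$ (whose lexicographically ordered columns are $W_{ij}(e_i - e_j)$) and of the vector $\alpha$ (whose correspondingly ordered entries are $1/\alpha_{ij}$), the sum $\sum_{j>i}\frac{W_{ij}}{\alpha_{ij}}(e_i - e_j)$ is precisely the matrix-vector product $E\alpha$. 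Chaining these equivalences yields that $x$ is an equilibrium if and only if $E\alpha = 0$, that is, $\alpha \in \ker E$.

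The main thing to be careful about is not a computation but the logical and combinatorial bookkeeping. I must check that every implication above is genuinely an equivalence so that the biconditional holds in both directions, the only nontrivial reversal being the cancellation of the common factor $\Omega x_i$, which is legitimate exactly because $x_i$ lies on the circle. I must also verify that the lexicographic ordering indexing the columns of $E$ agrees with the ordering indexing the entries of $\alpha$, so that $E\alpha$ matches the left-hand side of \eqref{eq:skewsum} term for term. No genuine obstacle arises beyond this packaging.
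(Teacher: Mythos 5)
Your proposal is correct and takes essentially the same route as the paper: the paper's proof is the single line that $E\alpha=0$ is equivalent to \eqref{eq:skewsum}, with the derivation of \eqref{eq:skewsum} (vanishing of the retraction term on the circle, inversion of $\alpha_{ij}\Omega$ via $\Omega^{-1}=-\Omega$, skew-symmetry from $W_{ij}=W_{ji}$ and $\alpha_{ij}=-\alpha_{ji}$, expansion in the generators $\Omega_{ij}$) carried out in the text immediately preceding the proposition. You have simply reassembled that same derivation as an explicit chain of equivalences, with the one genuinely worthwhile addition of noting that cancelling the common factor $\Omega x_{i}$ is legitimate because $x_{i}\neq 0$ on the circle.
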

\begin{proof}
The condition $E\alpha=0$ is equivalent to \eqref{eq:skewsum}.
\end{proof}
Let us assume for a moment that our graph is unweighted, i.e., that all nonzero $W_{ij}$ are equal to $1$. Then, the incidence matrix $E$ can be seen as a matrix over the Galois field $\operatorname{GF}\left(3\right)$. Taking this point of view, a cycle in our graph is a collection of columns of $E$ {that} are linearly dependent over $\operatorname{GF}\left(3\right)$, i.e., each cycle can be though of as a vector $c$ over $\operatorname{GF}\left(3\right)$ for which $Ec=0$. These cycles (vectors) $c$ constitute a vector space over $\operatorname{GF}\left(3\right)$, the nullspace of $E$ (over $\operatorname{GF}\left(3\right)$), which is called the \emph{cycle space} of the graph \cite{Godsil2001}. Its dimension is, in this sense, the number of linearly independent (over $\operatorname{GF}\left(3\right)$) cycles in the graph. Should we restrict our attention to unweighted graphs, then the foregoing proposition tells us that an equilibrium configuration must consist of reciprocal angles lying in the cycle space (over $\mathbb{R}$) of our graph and as the dimension of our cycle space increases, the possible number of %different types of 
equilibrium configurations increases, as well. On the other hand, graphs with trivial cycle spaces, such as line graphs (or any acyclic graphs), do not admit equilibria of \eqref{eq:circODE} whatsoever as $\alpha$ can not be zero (in fact, $\alpha$ does not only have to lie in the nullspace of $E$, but also in the cone of vectors {without} zero entries). In this context, recall that we had observed a connection between evenly spaced equilibrium configurations and regular graphs with Eulerian cycles in Example \ref{ex:graphs}. Now, having encountered the above algebraic characterization of equilibria, we are ready to generalize and formalize this observation. In fact, the regularity assumption may be omitted.

\begin{corollary}\label{cor:Eulerian}
Let all $x_{i}$ lie on the circle. Define $\alpha_{ij}$ as in \eqref{eq:angledef}. If the undirected, unweighted graph associated to the symmetric function $\left(i,j\right)\mapsto W_{ij}$ possesses an Eulerian cycle (equivalently, if every vertex has even and positive degree), then there is an equilibrium $x$ of \eqref{eq:circODE} such that all $\alpha_{ij}$ have the same absolute value.
\end{corollary}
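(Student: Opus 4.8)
The plan is to reduce the statement to the algebraic characterization already obtained in Proposition~\ref{prop:incidence} and then to manufacture, from an Eulerian cycle, a configuration whose reciprocal angles realize an element of $\ker E$. By that proposition, with all $x_i$ on the circle, $x$ is an equilibrium of \eqref{eq:circODE} exactly when the vector $\alpha$ whose entries are $1/\alpha_{ij}$ lies in $\ker E$. Demanding that all $\alpha_{ij}$ share a common absolute value $\theta$ is the same as asking that $\alpha=\tfrac1\theta\,\epsilon$ for some sign vector $\epsilon\in\{\pm1\}$ indexed by the edges, so the task splits into (i) producing such an $\epsilon$ with $E\epsilon=0$ and (ii) realizing it by genuine positions on the circle.

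For step (i) I would use the Eulerian cycle directly. Orienting each edge in the direction in which the cycle traverses it yields an orientation in which every vertex has equal in- and out-degree, since each visit enters along one edge and leaves along another. Assigning flow $+1$ to every edge in its traversal direction is therefore a circulation: the net flow vanishes at each vertex. Expressed in the lexicographic orientation that defines $E$ (an edge traversed against its lexicographic direction receives sign $-1$), this circulation is precisely a vector $\epsilon\in\{\pm1\}$ with $E\epsilon=0$, i.e.\ the incarnation of \eqref{eq:skewsum} in which every reciprocal angle has the same magnitude. This half is essentially the remark that an Eulerian cycle is a closed walk, hence an element of the (real) cycle space.

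For step (ii) I would place the points by walking along the Eulerian cycle and advancing the angular coordinate by the fixed amount $\theta$ at each edge, with the sign dictated by $\epsilon$; a vertex then inherits the accumulated angle at the moment it is reached. To make this assignment well defined one must check that whenever the walk returns to an already-placed vertex the accumulated angle agrees modulo $2\pi$, and that the walk closes up after the last edge. Both are holonomy conditions around the cycles of the graph: for an integer cycle vector $z$ they read $\theta\,\langle\epsilon,z\rangle\in 2\pi\mathbb{Z}$. Taking $\theta=2\pi/g$, where $g$ is the greatest common divisor of the integers $\langle\epsilon,z\rangle$ as $z$ ranges over the integer cycle lattice, satisfies every such condition; for a single cycle $C_n$ this gives $g=n$ and recovers the evenly spaced configuration with $\theta=2\pi/n$. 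Once the positions are fixed, every edge subtends an angle of magnitude $\theta$, the reciprocal-angle vector equals $\tfrac1\theta\,\epsilon\in\ker E$, and Proposition~\ref{prop:incidence} certifies that $x$ is an equilibrium.

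The hard part will be exactly this realizability in step (ii): the sign pattern $\epsilon$ by itself does not guarantee that a single magnitude $\theta$ can be transported consistently around every cycle of the graph, so one must verify that the holonomy conditions admit a legitimate $\theta$ (in particular one keeping the points distinct, which forces $g\ge 2$). The algebraic half (i) is immediate once the Eulerian cycle is read as a circulation; the geometric half (ii)—matching the prescribed signed angles to actual, distinct positions on the circle—is where the genuine work lies.
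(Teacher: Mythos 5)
Your step (i) is, almost verbatim, the paper's \emph{entire} proof: the paper states that an Eulerian cycle is a vector $c$ with all entries $\pm 1$ in the nullspace of the incidence matrix and then simply invokes Proposition \ref{prop:incidence}, full stop. Your circulation argument (equal in- and out-degree at every vertex along the traversal) is the careful version of that one line. Where you diverge is step (ii): the paper never constructs positions $x_{i}$ realizing the prescribed angles, even though Proposition \ref{prop:incidence} only characterizes equilibria among configurations that already lie on the circle, and even though the paper itself concedes, in the example following the corollary, that $E\alpha=0$ is merely necessary and must be supplemented by the ``physical'' constraint \eqref{eq:mod2pi}. Your holonomy conditions $\theta\left\langle\epsilon,z\right\rangle\in 2\pi\mathbb{Z}$ are exactly \eqref{eq:mod2pi}, your choice $\theta=2\pi/g$ satisfies them, and adjacent vertices automatically receive distinct positions once $\theta\not\equiv 0\ (\operatorname{mod}\ 2\pi)$, since each edge is traversed exactly once and shifts the angle by $\pm\theta$; so, as you say, everything reduces to verifying $g\geq 2$.

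That verification is a genuine gap, and it cannot be closed in general, because the statement read literally is false. Take $K_{5}$, which is $4$-regular and hence Eulerian. Every orientation of $K_{5}$ (in particular the one induced by an Eulerian traversal) contains a transitive triangle, since a tournament on four or more vertices cannot have all of its triangles cyclic; a transitive triangle is an integer cycle $z$ with $\left\langle\epsilon,z\right\rangle=\pm 1$, so $g=1$ and your construction collapses all points to one. Nor can any other construction succeed: if all $\left|\alpha_{ij}\right|=\theta$, the four neighbors of vertex $1$ must sit at angle $\pm\theta$ from $x_{1}$ and hence occupy at most two positions, yet in $K_{5}$ these neighbors are pairwise adjacent and must be pairwise distinct for \eqref{eq:circODE} to be defined at all (the logarithm in \eqref{eq:circODE} is the zero matrix, hence not invertible, at coincident neighbors). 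So for $K_{5}$ no equilibrium of \eqref{eq:circODE} with all $\left|\alpha_{ij}\right|$ equal exists; the obstruction you flagged is not a missing lemma in your write-up but a counterexample to the corollary, and the paper's one-line proof goes through only because it silently skips the realizability step you isolated. The conclusion does hold whenever $g\geq 2$ can be guaranteed---for instance for the cycle graph $C_{n}$, where $g=n$ and one recovers the evenly spaced configuration---but not for every Eulerian graph.
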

\begin{proof}
An Eulerian cycle is a vector $c$ over $\operatorname{GF}\left(3\right)$ from the nullspace of the incidence matrix $E$ with the property that all of its entries are either $1$ or $-1$. Recalling Proposition \ref{prop:incidence}, the claim remains proven. 
\end{proof}

We have seen that, when all positions $x_{i}$ are on the circle, then $E\alpha =0$ is a necessary and sufficient condition for $x$ to be an equilibrium of \eqref{eq:circODE}. Our next example explains why only solving $E\alpha=0$ alone (without having \eqref{eq:angledef} in mind) is necessary, but not sufficient.
\begin{example}
Consider the cycle graph $C_{n}$. Following our above convention of lexicographically ordering the vectors $e_{i}-e_{j}$ according to $\left(i,j\right)$, $j>i$, its incidence matrix $E$ has the columns $e_{1}-e_{2}$, $e_{1}-e_{n}$, $e_{2}-e_{3}$, $e_{3}-e_{4}$, and so forth. The equation $E\alpha=0$ thus reads
\begin{equation}
 \begin{bmatrix}\hphantom{-}1 & \hphantom{-}1 & \hphantom{-}0 & \hphantom{-}0 & \\ -1 & \hphantom{-}0 & \hphantom{-}1 & \hphantom{-}0 &  \\ \hphantom{-}0 & \hphantom{-}0 & -1 & \hphantom{-}1  & \\ \hphantom{-}0 & \hphantom{-}0 & \hphantom{-}0 & -1 & \\ \hphantom{-}0 & \hphantom{-}0 & \hphantom{-}0 & \hphantom{-}0  & \cdots\, \\ \hphantom{-}\vdots & \hphantom{-}\vdots & \hphantom{-}\vdots & \hphantom{-}\vdots  &  \\ \hphantom{-}0 & \hphantom{-}0 & \hphantom{-}0 & \hphantom{-}0  &  \\ \hphantom{-}0 & -1 & \hphantom{-}0 & \hphantom{-}0 & \end{bmatrix}\begin{bmatrix}1/\alpha_{12} \\ 1/\alpha_{1n} \\ 1/\alpha_{23} \\ 1/\alpha_{34} \\ \vdots \end{bmatrix}=0 \label{eq:cyclereciprocang}
\end{equation}
from which we infer that $\alpha$ must be in the ($1$-dimensional) cycle space of $C_{n}$, spanned by the vector $\left(1,-1,1,\dots,1\right)$. This implies that our solution is of the form
\begin{equation}
 \alpha_{12}=-\alpha_{1n}=\alpha_{23}=\alpha_{34}=\cdots
\end{equation}
and hence uniquely determined by, say, $\alpha_{12}$. But not every solution of this form can be realized as points $x_{i}$ on the circle such that \eqref{eq:angledef} is satisfied. %But it is of course impossible that an angle is greater than $2\pi$. This ``physical'' constraint is imposed by requiring that, in addition,
Recalling that the quantities $\alpha_{ij}$ in fact represent angles, we arrive at the additional ``physical'' constraint requiring that
\begin{equation}
B^{\top}\begin{bmatrix}\alpha_{12} \\ \alpha_{1n} \\ \alpha_{23} \\ \alpha_{34} \\ \vdots \end{bmatrix}\operatorname{mod}2\pi=0\label{eq:mod2pi}
\end{equation}
holds, where $B$ is a matrix over $\operatorname{GF}\left(3\right)$ whose columns are a basis for the cycle space of $C_{n}$, i.e., $EB=0$. The condition can be seen as the algebraic equivalent of insisting that the angles $\alpha_{ij}$ correspond to some realization of the positions $x_{i}$ such that they lie on the circle. This being said, together with the necessary condition $E\alpha=0$, the latter equation constitutes a sufficient condition for equilibria of \eqref{eq:circODE}. In the present example, $B$ is just the vector $\left(1,-1,1,\dots,1\right)$ and we conclude that $\left(n\alpha_{12}\right)\operatorname{mod}2\pi=0$ which is indeed equivalent to the solution $\alpha_{12}=2\pi/n$ observed in Example \ref{ex:circ}. 
\end{example}

Our previous example revealed that the condition $E\alpha=0$ together with \eqref{eq:mod2pi} is sufficient to characterize equilibria of \eqref{eq:circODE}. Still, both equations are hard to solve explicitly. As for $E\alpha =0$, this complication stems from the fact that solutions must be contained in the cone of vectors {that} have all entries nonzero, caused by the reciprocal angles appearing in $\alpha$. Yet, this complication is (partially) overcome by multiplying the $p$th element of $E\alpha$ with
\begin{equation}
\prod_{\substack{W_{ip}\neq 0\\p>i}}\alpha_{ip}\prod_{\substack{W_{pj}\neq 0\\j>p}}\alpha_{pj},  \label{eq:allangles}
\end{equation}
turning it into a symmetric polynomial, viz. symmetric with respect to permutations of the indices $\lbrace i\neq p\hspace{1pt}|W_{ip}\neq 0\rbrace$. The zeros of the resulting polynomials, i.e., the solutions to $DE\alpha=0$, with $D\in\mathbb{R}^{n\times n}$ being the (full-rank) diagonal matrix {that} has the entries \eqref{eq:allangles}, $p=1,\dots,n$ on its diagonal, hence characterize our equilibria, as well, which is summarized in the following proposition.
\begin{proposition}\label{prop:poly}
 Let all $x_{i}$ lie on the circle and let $\alpha_{ij}$ be defined as in \eqref{eq:angledef}. Then $x$ is an equilibrium of \eqref{eq:circODE} if and only if all $\alpha_{ij}$ are zeros of the $n$ polynomials resulting from multiplying $\sum_{i>p}W_{pi}/\alpha_{pi}-\sum_{j<p}W_{jp}/\alpha_{jp}$, with \eqref{eq:allangles}, $p=1,\dots,n$. The zeros of the $p$th polynomial are invariant under permutations of $\lbrace i\neq p\hspace{1pt}|W_{ip}\neq 0\rbrace$.
\end{proposition}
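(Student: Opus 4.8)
The plan is to derive the claim directly from Proposition~\ref{prop:incidence}, which already asserts that, with all $x_i$ on the circle, $x$ is an equilibrium of \eqref{eq:circODE} if and only if $E\alpha=0$. The entire additional content of the present proposition is that multiplying each row of $E\alpha$ by the corresponding entry \eqref{eq:allangles} leaves the zero set unchanged while turning each row into a genuine polynomial in the angles.

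First I would read off the $p$th component of $E\alpha$ from the structure of the weighted incidence matrix. Since the column of $E$ indexed by $(i,j)$, $j>i$, is $W_{ij}(e_i-e_j)$, vertex $p$ collects a $+W_{pi}/\alpha_{pi}$ from every incident edge $(p,i)$ with $i>p$ and a $-W_{jp}/\alpha_{jp}$ from every incident edge $(j,p)$ with $j<p$; hence the $p$th component equals $\sum_{i>p}W_{pi}/\alpha_{pi}-\sum_{j<p}W_{jp}/\alpha_{jp}$, which is exactly the expression named in the statement. Next I would form the diagonal matrix $D\in\mathbb{R}^{n\times n}$ whose $p$th diagonal entry is \eqref{eq:allangles}, the product of all angles on edges incident to $p$. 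Because the $x_i$ are pairwise distinct (we work on $M^{n}\setminus\Delta$), each directed angle $\alpha_{ij}$ is nonzero, so each such product is nonzero and $D$ is full-rank. Consequently $E\alpha=0$ if and only if $DE\alpha=0$. Multiplying the $p$th row through by its full product of incident angles cancels precisely the denominator in every summand and leaves, in the $p$th slot, the polynomial obtained from $\sum_{i>p}W_{pi}/\alpha_{pi}-\sum_{j<p}W_{jp}/\alpha_{jp}$ via \eqref{eq:allangles}. Combining this with Proposition~\ref{prop:incidence} gives the stated equivalence: $x$ is an equilibrium if and only if the angles $\alpha_{ij}$ are common zeros of these $n$ polynomials.

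The part I expect to require the most care is the symmetry assertion, and here the useful device is the identity $\alpha_{ij}=-\alpha_{ji}$ established before Proposition~\ref{prop:incidence}. Writing $\beta_q:=\alpha_{pq}$ for the directed angle from $x_p$ to each neighbor $q$, this identity collapses the two sums above into the single symmetric form $\sum_{q}W_{pq}/\beta_q$ (the $j<p$ terms acquire a compensating sign from $\alpha_{jp}=-\alpha_{pj}$), so that after clearing denominators the $p$th polynomial becomes $\pm\sum_{q}W_{pq}\prod_{q'\neq q}\beta_{q'}$. This is manifestly invariant under any permutation of the $(\beta_q,W_{pq})$ pairs, that is, of the neighbor set $\{i\neq p\,|\,W_{ip}\neq 0\}$, and therefore so is its zero set, as claimed. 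The only bookkeeping subtlety is tracking the overall sign $(-1)^{|\{q<p\}|}$ produced when rewriting the lexicographically ordered angles in terms of the $\beta_q$; since it is a nonzero scalar factor it affects neither the vanishing of the polynomial nor its symmetry and can be discarded, completing the argument.
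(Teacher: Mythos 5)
Your proposal is correct and takes essentially the same route as the paper's proof: both invoke Proposition \ref{prop:incidence} and then observe that multiplying $E\alpha=0$ by the full-rank diagonal matrix whose $p$th entry is \eqref{eq:allangles} (nonzero because no $\alpha_{ij}$ can vanish) leaves the solution set unchanged while clearing all denominators. Your symmetry argument, which uses $\alpha_{ij}=-\alpha_{ji}$ to rewrite the $p$th polynomial as $\pm\sum_{q}W_{pq}\prod_{q'\neq q}\alpha_{pq'}$, is simply a more explicit rendering of the paper's one-line observation that \eqref{eq:allangles} is invariant under such permutations.
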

\begin{proof}
Recall Proposition \ref{prop:incidence}. We notice that $\sum_{i>p}W_{pi}/\alpha_{pi}-\sum_{j<p}W_{jp}/\alpha_{jp}$ is the $p$th element of $E\alpha$. As our angles $\alpha_{ij}$ cannot be zero, multiplication of $E\alpha=0$ with the diagonal matrix $D$ having \eqref{eq:allangles} as its $p$th diagonal element does not change the zeros of the resulting system of equations. Symmetry with respect to permutations of $\lbrace i\neq p\hspace{1pt}|W_{ip}\neq 0\rbrace$ follows from finding that \eqref{eq:allangles} remains invariant under such permutations.
\end{proof}

Our next example illustrates how the procedure suggested in the foregoing proposition can turn out useful for computation of equilibria of \eqref{eq:circODE}, particularly for graphs with large cycle spaces.

\begin{example}
Following the approach from our previous example, let us now try to characterize equilibria of \eqref{eq:circODE} for more complicated graphs: we consider $n=7$ and the Moser spindle depicted in Fig. \ref{fig:Moser}. Its incidence matrix is
\setcounter{MaxMatrixCols}{11}
\begin{equation*}
 E=%{\fontsize{10pt}{1em}\selectfont\begin{bmatrix} %\\[-.9em]
 \begin{bmatrix}
 \hphantom{-}1\!\!	&\hphantom{-}1\!\!	&\hphantom{-}1\!\!	&	\hphantom{-}0\!\!&	\hphantom{-}0\!\!&	\hphantom{-}0\!\!&	\hphantom{-}0\!\!&	\hphantom{-}0\!\!&	\hphantom{-}0\!\!&	\hphantom{-}0\!\!&  \hphantom{-}0\\
 -1\!\!	&\hphantom{-}0\!\!	&\hphantom{-}0\!\!	&\hphantom{-}1\!\!	&\hphantom{-}1\!\!	&	\hphantom{-}0\!\!&	\hphantom{-}0\!\!&	\hphantom{-}0\!\!&	\hphantom{-}0\!\!&	\hphantom{-}0\!\!& \hphantom{-}0 \\
\hphantom{-}0\!\!&	-1\!\!	&\hphantom{-}0\!\!	&-1\!\!	&\hphantom{-}0\!\!	&\hphantom{-}1\!\!	&	\hphantom{-}0\!\!&	\hphantom{-}0\!\!&	\hphantom{-}0\!\!&	\hphantom{-}0\!\!& \hphantom{-}0 \\
\hphantom{-}0\!\!&		\hphantom{-}0\!\!&	\hphantom{-}0\!\!&	\hphantom{-}0\!\!&-1\!\!	&-1\!\!	&\hphantom{-}1\!\!	&\hphantom{-}1\!\!	&	\hphantom{-}0\!\!&	\hphantom{-}0\!\!& \hphantom{-}0 \\
	\hphantom{-}0\!\!&	\hphantom{-}0\!\!&	\hphantom{-}0\!\!&	\hphantom{-}0\!\!&	\hphantom{-}0\!\!&	\hphantom{-}0\!\!&-1\!\!	&\hphantom{-}0\!\!	&\hphantom{-}1\!\!	&\hphantom{-}1\!\!	& \hphantom{-}0\\
	\hphantom{-}0\!\!&	\hphantom{-}0\!\!&	\hphantom{-}0\!\!&	\hphantom{-}0\!\!&	\hphantom{-}0\!\!&	\hphantom{-}0\!\!&	\hphantom{-}0\!\!&-1\!\!	&-1\!\!	&\hphantom{-}0\!\!	&\hphantom{-}1 \\
	\hphantom{-}0\!\!&	\hphantom{-}0\!\!&-1\!\!	&	\hphantom{-}0\!\!&	\hphantom{-}0\!\!&	\hphantom{-}0\!\!&	\hphantom{-}0\!\!&	\hphantom{-}0\!\!&	\hphantom{-}0\!\!&-1\!\!	&-1
 %\end{bmatrix}}
 \end{bmatrix}
\end{equation*}
and its ($5$-dimensional) cycle space is the image of the matrix
\begin{equation}
 B=\begin{bmatrix}
\hphantom{-}1	&-1		&\hphantom{-}0	&\hphantom{-}1	&\hphantom{-}1 \\
-1		&\hphantom{-}1	&\hphantom{-}0	&\hphantom{-}0	&\hphantom{-}0 \\
\hphantom{-}0	&\hphantom{-}0	&\hphantom{-}0	&-1		&-1 \\
\hphantom{-}1	&\hphantom{-}0	&\hphantom{-}0	&\hphantom{-}0	&\hphantom{-}0 \\
\hphantom{-}0	&-1		&\hphantom{-}0	&\hphantom{-}1	&\hphantom{-}1 \\
\hphantom{-}0	&\hphantom{-}1	&\hphantom{-}0	&\hphantom{-}0	&\hphantom{-}0 \\
\hphantom{-}0	&\hphantom{-}0	&\hphantom{-}1	&\hphantom{-}1	&\hphantom{-}0 \\
\hphantom{-}0	&\hphantom{-}0	&-1		&\hphantom{-}0	&\hphantom{-}1 \\
\hphantom{-}0	&\hphantom{-}0	&\hphantom{-}1	&\hphantom{-}0	&\hphantom{-}0 \\
\hphantom{-}0	&\hphantom{-}0	&\hphantom{-}0	&\hphantom{-}1	&\hphantom{-}0 \\
\hphantom{-}0	&\hphantom{-}0	&\hphantom{-}0	&\hphantom{-}0	&\hphantom{-}1
 \end{bmatrix}
\end{equation}
over $\operatorname{GF}\left(3\right)$. Solving $E\alpha=0$ and \eqref{eq:mod2pi} at the same time, even numerically, turns out to be a hard task. One complication is that no entry of $\alpha$ can be zero in the seemingly linear equation $E\alpha=0$. This could be resolved by maximizing the support of $\alpha$ subject to $E\alpha=0$. Here, we opt to multiply the $p$th element of $E\alpha$ by \eqref{eq:allangles}, as suggested in Proposition \ref{prop:poly}, to obtain the symmetric polynomials
\begin{align*}
 \alpha_{13}\alpha_{17}+\alpha_{12}\alpha_{17}+\alpha_{12}\alpha_{13}=0, \\
 -\alpha_{23}\alpha_{24}+\alpha_{12}\alpha_{24}+\alpha_{12}\alpha_{23}=0, \\
  -\alpha_{23}\alpha_{34}-\alpha_{13}\alpha_{34}+\alpha_{13}\alpha_{23}=0, \\
    -\alpha_{34}\alpha_{45}\alpha_{46}-\alpha_{24}\alpha_{45}\alpha_{46}+\alpha_{24}\alpha_{34}\alpha_{46}+\alpha_{24}\alpha_{34}\alpha_{45}=0, \\
      -\alpha_{56}\alpha_{57}+\alpha_{45}\alpha_{57}+\alpha_{45}\alpha_{56}=0, \\ 
      -\alpha_{56}\alpha_{67}-\alpha_{46}\alpha_{67}+\alpha_{46}\alpha_{56}=0, \\
            -\alpha_{57}\alpha_{67}-\alpha_{17}\alpha_{67}-\alpha_{17}\alpha_{57}=0, 
\end{align*}
the $p$th polynomial only being symmetric with respect to permutations not involving $p$. Having this formulation at hand, we know that the angles for which $x$ is at rest must be contained in the intersection of the algebraic varieties containing the zeros of these polynomials, i.e., we may now solve these equations iteratively (by repeatedly intersecting these algebraic varieties), here obtaining
\begin{align*}
 \alpha_{12}=\alpha_{34}=\alpha_{45}=\alpha_{67}&=-2\left(5\right.+\sqrt{5}\!\!\hspace{-3pt}\left.\hphantom{3}\right)\pi/11\left(3\right.+\sqrt{5}\!\!\hspace{-3pt}\left.\hphantom{3}\right) ,\\ \alpha_{13}=\alpha_{24}=\alpha_{46}=\alpha_{57}&=\left(3\right.+\sqrt{5}\!\!\hspace{-3pt}\left.\hphantom{3}\right)\alpha_{12}/2,\\ \alpha_{17}&=2\left(\pi+\alpha_{12}+\alpha_{13}\right),\\ \alpha_{23}=\alpha_{56}&=\alpha_{13}-\alpha_{12},
\end{align*}
which is indeed a solution to $E\alpha=0$ as well as to \eqref{eq:mod2pi}. Indeed, solving \eqref{eq:circODE} numerically for some initial condition with this choice of graph and plotting the numerical solutions in Fig. \ref{fig:Efig_MOSER} (with initial condition again indicated by blue circles (\raisebox{1pt}{\tikz{\filldraw[blue] (0,0) circle (.06);}}) and limiting point again marked with red circles (\raisebox{1pt}{\tikz{\filldraw[red] (0,0) circle (.06);}})), this computation is confirmed as the positions approach precisely the configuration described by the above angles.

\begin{figure}
 \centering
 \begin{tikzpicture}[scale=1.2]
    \draw (0,0) circle (1);
    \draw (.66,-1) -- (0,1);
        \draw (-.66,-1) -- (0,1);
    \draw (-1,.33) -- (-.33,0);
    \draw (1,.33) -- (.33,0);
\draw[fill=white] (.33,0) circle (.225) node {$5$};
\draw[fill=white] (-.33,0) circle (.225) node {$3$};
\draw[fill=white,rotate=-90+36] (1,0) circle (.225) node {$7$};
  \draw[fill=white,rotate=72-90+36] (1,0) circle (.225) node {$6$}; 
  \draw[fill=white,rotate=2*72-90+36] (1,0) circle (.225) node {$4$};
  \draw[fill=white,rotate=3*72-90+36] (1,0) circle (.225) node {$2$};
  \draw[fill=white,rotate=4*72-90+36] (1,0) circle (.225) node {$1$};
\end{tikzpicture}
 \caption{The Moser spindle.}\label{fig:Moser}
\end{figure}
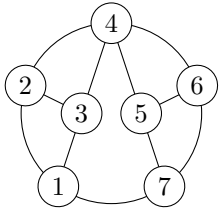

\begin{figure}
\centering
\input{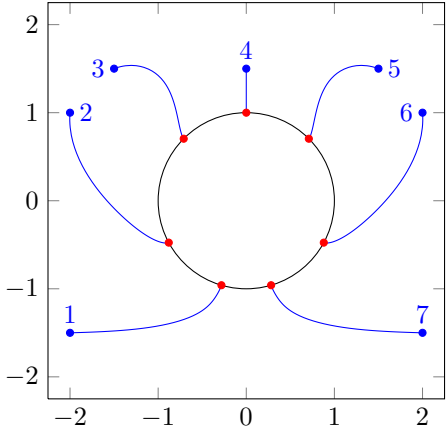}
\hphantom{-1-1-!}
\caption{Numerical solution of \eqref{eq:circODE} for the Moser spindle.}\label{fig:Efig_MOSER}
\end{figure}

\end{example}

An option which we did not exploit yet is to influence a formation by scaling the values of our nonzero weights $W_{ij}$. As our characterization $E\alpha=0$ reveals a rather explicit connection of these weights and equilibria of \eqref{eq:circODE}, namely that $\alpha_{ij}$ becomes $\left(W_{ij}^{\prime}/W_{ij}\right)\alpha_{ij}$ as we change $W_{ij}$ to $W_{ij}^{\prime}$, influencing individual angles by adapting the associated weights should be comparatively simple. In the following example, we exploit this observation to adjust the shape of a formation ad libitum.

\begin{example}
We exploit the possibility of adapting the weights $W_{ij}$ so as to eventually attain a desired configuration. Although we assumed that the weights may only be $0$ or $1$ for most of this paper, we now turn our attention to the case where we have nonidentical weights and discuss how solutions to $E\alpha=0$ are affected. Let us consider $8$ edges and suppose that our goal was to let the positions of systems $\left(1,2\right)$, $\left(3,4\right)$, $\left(5,6\right)$, and $\left(7,8\right)$ be pairwise close to each other but to still have these pairs be evenly spaced on the circle. Recalling equation \eqref{eq:cyclereciprocang}, it becomes evident that we must scale columns $1$, $4$, $6$, and $8$ $E$ with weights $W_{12}=W_{34}=W_{56}=W_{78}\ll 1$ in order to achieve this goal. Let us choose these weights to be $1/4$. Solving \eqref{eq:circODE} numerically for this choice of graph and plotting the numerical solutions in Fig. \ref{fig:Efig2} (with initial condition again indicated by blue circles (\raisebox{1pt}{\tikz{\filldraw[blue] (0,0) circle (.06);}}) and limiting point again marked with red circles (\raisebox{1pt}{\tikz{\filldraw[red] (0,0) circle (.06);}})), we find that the positions of systems $\left(1,2\right)$, $\left(3,4\right)$, $\left(5,6\right)$, and $\left(7,8\right)$ indeed move pairwise close to each other, but with the pairs being evenly spaced, as desired. In fact, evaluating $E\alpha=0$, we find that $\alpha_{23}=\alpha_{45}=\alpha_{67}=\alpha_{81}=4\alpha_{12}=4\alpha_{34}=4\alpha_{56}=4\alpha_{78}$. The condition \eqref{eq:mod2pi} remains satisfied for $B$ being a basis for the cycle space of the unweighted graph.

\begin{figure}
\centering
\input{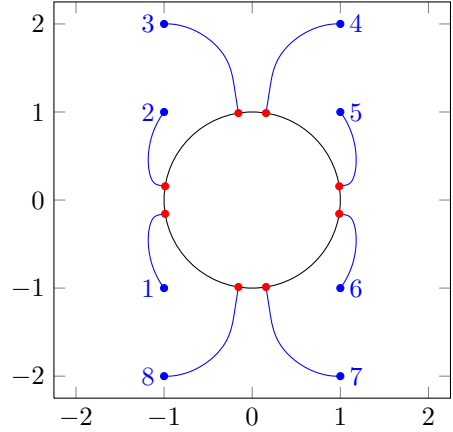}
\hphantom{-1-1-!}
\caption{Numerical solution of \eqref{eq:circODE} for the cycle graph $C_{8}$ but with $W_{12}=W_{34}=W_{56}=W_{78}=1/4$.}\label{fig:Efig2}
\end{figure}

\end{example}

In the light of Corollary \ref{cor:Eulerian}, the observation from the foregoing example can be expressed in a more general fashion.

\begin{corollary}
Let all $x_{i}$ lie on the circle. Define $\alpha_{ij}$ as in \eqref{eq:angledef}. If the undirected, weighted graph associated to the symmetric function $\left(i,j\right)\mapsto W_{ij}$ possesses an Eulerian cycle (equivalently, if every vertex has even and positive degree), then there is an equilibrium $x$ of \eqref{eq:circODE} such that all $W_{ij}/\alpha_{ij}$ have the same absolute value.
\end{corollary}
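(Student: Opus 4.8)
The plan is to reduce this weighted statement to the unweighted Corollary \ref{cor:Eulerian} by exploiting the way the incidence matrix factors through the weights. First I would observe that the weighted incidence matrix $E$, whose columns are $W_{ij}\left(e_{i}-e_{j}\right)$, factors as $E=\bar{E}D$, where $\bar{E}$ is the \emph{unweighted} incidence matrix (columns $e_{i}-e_{j}$) and $D$ is the diagonal matrix carrying the weights $W_{ij}$ on its diagonal, in the same lexicographic order. Writing $\alpha$ for the vector with entries $1/\alpha_{ij}$ as in Proposition \ref{prop:incidence}, the product $D\alpha$ is precisely the vector $\beta$ whose entries are $W_{ij}/\alpha_{ij}$. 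Hence $E\alpha=\bar{E}\beta$, and by Proposition \ref{prop:incidence} the configuration $x$ is an equilibrium of \eqref{eq:circODE} if and only if $\beta$ lies in the nullspace of the unweighted incidence matrix $\bar{E}$.

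Next I would note that the hypothesis --- every vertex having even and positive degree, equivalently the existence of an Eulerian cycle --- is a purely combinatorial property of the underlying graph and is therefore insensitive to the weights. Consequently, exactly as in the proof of Corollary \ref{cor:Eulerian}, an Eulerian cycle furnishes a vector $c$ in the nullspace of $\bar{E}$ all of whose entries are $\pm 1$. Choosing $\beta=\lambda c$ for a scalar $\lambda\neq 0$ then keeps $\beta$ in the nullspace of $\bar{E}$ while forcing every $W_{ij}/\alpha_{ij}=\lambda c_{ij}$ to share the common absolute value $\left|\lambda\right|$, which is exactly the conclusion sought. The corresponding angles are recovered as $\alpha_{ij}=\left(W_{ij}/\lambda\right)c_{ij}$; that is, they are the equal-magnitude angles of the unweighted Eulerian equilibrium rescaled edgewise by the weights, in agreement with the scaling observation stated just before the corollary and with the $C_{8}$ example.

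The step I expect to be the genuine obstacle is \emph{realizability}: I must still certify that the prescribed magnitudes $\left|\alpha_{ij}\right|=W_{ij}/\left|\lambda\right|$ arise from an actual placement of the $x_{i}$ on the circle, i.e. that the constraint \eqref{eq:mod2pi} can be met, since $\bar{E}\beta=0$ alone is only necessary. Because $\beta=\lambda c$ leaves a single free scalar $\lambda$, the plan is to fix $\lambda$ so that the signed sums of the angles $\alpha_{ij}$ around a basis of the cycle space are integer multiples of $2\pi$; with the columns of $B$ spanning the cycle space this is precisely the condition $B^{\top}\!\left(\alpha_{ij}\right)_{j>i}\operatorname{mod}2\pi=0$ of \eqref{eq:mod2pi}, now evaluated on the weight-scaled angles. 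Establishing that a suitable $\lambda$ always exists under the Eulerian hypothesis is where the argument has to be made careful, and I would model it on the realization already carried out for Corollary \ref{cor:Eulerian}, noting that the weight rescaling only reweights each summand in the cycle conditions and hence does not change their solvability for $\lambda$ beyond a nonzero rescaling. It is this final choice of $\lambda$ that upgrades the algebraic solution into a bona fide equilibrium with all $W_{ij}/\alpha_{ij}$ of equal absolute value.
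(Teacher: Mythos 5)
Your first two paragraphs are, in substance, exactly the paper's proof: the paper notes that $EW^{-1}$ is the unweighted incidence matrix (your factorization $E=\bar{E}D$ with $D=W$), invokes Proposition \ref{prop:incidence} to identify equilibria with the nullspace condition, and takes $\alpha=W^{-1}c$ for an Eulerian cycle $c$ with entries $\pm 1$ --- your choice $\beta=\lambda c$ specialized to $\lambda=1$, giving $W_{ij}/\alpha_{ij}=c_{ij}$ of common absolute value $1$. The paper's proof ends there; nullspace membership is treated as sufficient.

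Your third paragraph is where you part ways with the paper, and you should know that the ``realization already carried out for Corollary \ref{cor:Eulerian}'' does not exist: neither corollary's proof in the paper addresses \eqref{eq:mod2pi} or the principal-value constraint implicit in \eqref{eq:angledef}. You are right that this is the genuine obstacle, but your closing claim that the weight rescaling ``does not change solvability for $\lambda$'' is false. The cycle conditions quantize $\lambda$: around each cycle the prescribed angles $\alpha_{ij}=W_{ij}c_{ij}/\lambda$ must sum to a multiple of $2\pi$, while \eqref{eq:angledef} additionally forces $\left|\alpha_{ij}\right|\leq\pi$, and these two demands can conflict. For the triangle with $W_{12}=0.6$, $W_{13}=W_{23}=0.2$ (which is Eulerian), the cycle condition gives $1/\lambda\in 2\pi\mathbb{Z}\setminus\lbrace 0\rbrace$, hence $\left|\alpha_{12}\right|=0.6\cdot 2\pi\left|k\right|\geq 1.2\pi>\pi$, so no admissible $\lambda$ exists; and since for a triangle the nullspace of $\bar{E}$ is one-dimensional, every equilibrium automatically has all $\left|W_{ij}/\alpha_{ij}\right|$ equal, so this weighted triangle in fact admits no equilibrium of \eqref{eq:circODE} whatsoever. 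The realizability step therefore cannot be closed for arbitrary positive weights: the corollary holds only under the paper's purely algebraic reading of ``equilibrium'' (which your first two paragraphs reproduce), or under an additional restriction on the weights.
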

\begin{proof}
The claim is proven alike Corollary \ref{cor:Eulerian}: let $c$ be an Eulerian cycle, i.e., every entry of $c$ is either $1$ or $-1$ and $EW^{-1}c=0$ for $E$ being the weighted incidence matrix and $W$ being the diagonal matrix that has the weights $W_{ij}$, with lexicographically ordered indices $\left(i,j\right)$, $j>i$, as its diagonal entries (since $EW^{-1}$ is the unweighted incidence matrix). Now Proposition \ref{prop:incidence} tells us that $E\alpha=0$ classifies all equilibria. Thus $\alpha=W^{-1}c$, and hence $W\alpha=c$, defines an equlibrium and thus the claim is proven. 
\end{proof}

Until now, we restricted our attention to $M$ being the circle but we initially said that any formula which applies to the circle can be continuously transformed into a formula applicable to a (smooth) Jordan curve. Thus, we now briefly consider the case where $M$ is the image of some Jordan curve $\gamma:\left[0,1\right]\to\mathbb{R}^{2}$, similar to the efforts taken in \cite{Zhang2007}. Instead of the definition for the angles \eqref{eq:angledef}, we must now introduce the scalars $S_{ij}$ such that they satisfy
\begin{align}
 \left(S_{j}-S_{ij}\right)\operatorname{mod}1&=S_{i}\hspace{1pt}\;\;\;\text{ if }\;\;\;S_{i}-S_{j}\leq1/2,\\
 \left(S_{i}+S_{ij}\right)\operatorname{mod}1&=S_{j}\;\;\;\text{ if }\;\;\;S_{j}-S_{i}\leq1/2,
\end{align}
where the scalars $S_{i}$ are defined as
\begin{equation}
S_{i}:=\left.\gamma\right|_{\left[0,1\right)}^{-1}\!\left(r\left(x_{i}\right)\right),
\end{equation}
and replace our previous formulations $E\alpha=0$ and \eqref{eq:mod2pi} by the conditions 
\begin{equation}
E\begin{bmatrix}1/S_{12} \\ 1/S_{13} \\ 1/S_{14} \\ \vdots \end{bmatrix}=0\;\;\;\text{ and }\;\;\;B^{\top}\begin{bmatrix}S_{12} \\ S_{13} \\ S_{14} \\ \vdots \end{bmatrix}\operatorname{mod}1=0,
\end{equation}
respectively, where the entries $1/S_{ij}$ and $S_{ij}$ (only occurring if the corresponding weight $W_{ij}$ is nonzero) are lexicographically ordered according to the indices $\left(i,j\right)$, $j>i$.

\section{Balancing on the Special Euclidean Group}\label{sec:Eucl}
In formation control, one sometimes wishes to associate an attitude to a system in addition to its position. For instance, in formation flight, one would not only want that the positions of the airplanes arrange in a certain shape, but also that their heading angles agree. This interest is reflected by recent efforts to extend the formation control algorithms based upon rigid frameworks to the special Euclidean group \cite{Zelazo,Zhao}. In this section, we thus enhance the technique proposed in section \ref{sec:control} with the capability of taking orientations into account. That is, we consider formation control problems in the special Euclidean group $\operatorname{SE}\left(m\right)$ (one would expect that $m$ then is either $2$ or $3$). To this end, let the desired shape which describes our formation be an compactly and smoothly embedded submanifold $M$ of $\operatorname{SE}\left(m\right)$. Our formation control problem can then be cast as asymptotically bringing the poses (positions and attitudes) of our systems to an evenly spaced configuration on $M$ in a stable fashion. If one replaces the special Euclidean group with the sphere, then this idea is conceptually related to the approach taken in \cite{Paley2009}. 

In order to adapt our approach from section \ref{sec:control} to systems living on the special Euclidean group, we must first refresh our terminology. Let $T_{x}\operatorname{SE}\left(m\right)^{n}$ denote the tangent space of $\operatorname{SE}\left(m\right)^{n}$ at $x$. Then $T_{x}M^{n}$ is the subspace of $T_{x}\operatorname{SE}\left(m\right)^{n}$ consisting of velocity vectors tangent to $M^{n}$ at $x$ and  $N_{x}M^{n}$ is the orthogonal complement of $T_{x}M^{n}$ in $T_{x}\operatorname{SE}\left(m\right)^{n}$. The normal bundle of $M^{n}$ is a the vector bundle composed of the fibers $N_{x}M^{n}$, $x\in M^{n}$. Now, for some $x\in\operatorname{SE}\left(m\right)^{n}$, employ the notation $x=\left(R,p\right)$ with $R\in\operatorname{SO}\left(m\right)^{n}$ and $p\in\mathbb{R}^{mn}$. Similarly, denote a vector from $T_{x}\operatorname{SE}\left(m\right)^{n}$ by $\left(\Omega,v\right)$. A tubular neighborhood of $M^{n}$ is a diffeomorphic image of $NM^{n}\to\operatorname{SE}\left(m\right)^{n}$,
\begin{equation}
\left(\left(R,p\right),\left(\Omega,v\right)\right)\mapsto\left(R\operatorname{exp}\left(R^{\top}\Omega\right),p+v\right),
\end{equation}
where $\operatorname{exp}:\mathfrak{so}\left(m\right)^{n}\to\operatorname{SO}\left(m\right)^{n}$ denotes the exponential map. The retraction from the tubular neighborhood onto $M^{n}$ is then given by 
\begin{equation}
 r:\left(R\operatorname{exp}\left(R^{\top}\Omega\right),p+v\right)\mapsto\left(R,p\right).
\end{equation}
Our function $\phi:M^{n}\to\mathbb{R}$ is still defined by \eqref{eq:phidef}, with $d\left(x_{i},x_{j}\right)$ being the length of the shortest curve (in $M$) joining $x_{i}$ and $x_{j}$. Thus, $\operatorname{grad}\phi$ takes elements of $M^{n}$ to tangent vectors thereof. Finally, thinking of $x=\left(R,p\right)$ in its homogeneous representation, let $x^{-1}$ denote the inverse element $\left(R^{\top},-R^{\top}p\right)$. Then, instead of \eqref{eq:ODE} we consider
\begin{equation}
 \dot{x}=x\operatorname{log}{\textstyle\left(x^{-1}\hspace{1pt}r\left(x\right)\right)}+xr\left(x\right)^{-1}\operatorname{grad}\phi\left(r\left(x\right)\right) \label{eq:SEODE}
\end{equation}
where $\operatorname{log}:\operatorname{SE}\left(m\right)^{n}\to\mathfrak{se}\left(m\right)^{n}$ is the logarithmic map and $xr\left(x\right)^{-1}\operatorname{grad}\phi\left(r\left(x\right)\right)$ is just the parallel transport of $\operatorname{grad}\phi\left(r\left(x\right)\right)$ from $T_{r\left(x\right)}\operatorname{SE}\left(m\right)^{n}$ to $T_{x}\operatorname{SE}\left(m\right)^{n}$.

\begin{theorem}
Let $X$ be a superlevel set of $\phi$ on which $\phi$ is regular away from the maximizers $X^{\ast}$. These maximizers constitute an asymptotically stable set of equilibria of \eqref{eq:SEODE} and $r^{-1}\left(X\right)$ is a subset of their region of asymptotic stability.
\end{theorem}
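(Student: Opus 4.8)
The plan is to follow, step for step, the three-part template of the proof of Theorem~\ref{thm:main}, replacing the Euclidean splitting $x=r(x)+v(x)$ by the splitting induced by the exponential chart $\left(\left(R,p\right),\left(\Omega,v\right)\right)\mapsto\left(R\operatorname{exp}\left(R^{\top}\Omega\right),p+v\right)$ of the tubular neighborhood. First I would analyze how $r(x)$ evolves along solutions of \eqref{eq:SEODE}. The Jacobian of $r$ at $x$ annihilates $N_{r(x)}M^{n}$ and restricts to the identity on the tangential directions read off through that chart. The first summand $x\operatorname{log}\left(x^{-1}r(x)\right)$ of \eqref{eq:SEODE} is, by construction of the tubular neighborhood, the initial velocity of the fibre geodesic joining $x$ and $r(x)$, hence normal, and is therefore killed by the Jacobian of $r$. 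The second summand is the parallel transport of the tangent vector $\operatorname{grad}\phi\left(r(x)\right)$ along that same geodesic, so the Jacobian of $r$ carries it back to $\operatorname{grad}\phi\left(r(x)\right)$ itself. Consequently $r(x)$ again obeys $\dot r(x)=\operatorname{grad}\phi\left(r(x)\right)$, which is precisely \eqref{eq:rODE}; the Lyapunov argument with the nonnegative function $\phi^{\ast}-\phi$ together with LaSalle's invariance principle (applicable since $M$, hence $M^{n}$ and the sublevel set $X$, is compact) then shows verbatim that $r^{-1}\left(X^{\ast}\right)$ is an asymptotically stable invariant set of \eqref{eq:SEODE} whose region of attraction is at least $r^{-1}\left(X\right)$.

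Second, I would track the normal part of $x$. Let $w(x)$ denote the normal coordinate of $x$ in the chart, i.e. the pair $\left(\Omega,v\right)\in N_{r(x)}M^{n}$ with $x=\left(R\operatorname{exp}\left(R^{\top}\Omega\right),p+v\right)$ and $\left(R,p\right)=r(x)$; on the $\operatorname{SO}\left(m\right)^{n}$ factor this is read off from $\operatorname{log}\left(x^{-1}r(x)\right)$ and on the $\mathbb{R}^{mn}$ factor it is simply the translation difference. The aim is to establish $\dot w(x)=-w(x)$, so that $w$ decays exponentially and $M^{n}=w^{-1}\left(\{0\}\right)$ is an asymptotically stable invariant set of \eqref{eq:SEODE} whose region of asymptotic stability again contains $r^{-1}\left(X\right)$, in direct analogy with the computation $\dot v(x)=-v(x)$ of Theorem~\ref{thm:main} and with \cite[proof of Theorem 2]{Montenbruck2016}. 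The first summand of \eqref{eq:SEODE} is exactly the velocity pointing from $x$ back toward $r(x)$, which in the chart is the negative of the normal coordinate and thus contributes the contraction $-w(x)$; the second summand is the parallel transport of a tangential vector, and the point to be made is that parallel transport along the fibre geodesic preserves the tangent--normal decomposition and therefore leaves $w$ unchanged.

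The main obstacle will be this last claim in the curved setting. On the $\mathbb{R}^{mn}$ factor the splitting is flat and one recovers $\dot v(x)=-v(x)$ exactly as in Theorem~\ref{thm:main}, but on the $\operatorname{SO}\left(m\right)^{n}$ factor one must verify both that $x\operatorname{log}\left(x^{-1}r(x)\right)$ genuinely differentiates to $-w$ and that the transported vector $xr(x)^{-1}\operatorname{grad}\phi\left(r(x)\right)$ stays tangential, i.e. that the curvature of $\operatorname{SO}\left(m\right)$ does not leak tangential motion into the normal coordinate. I expect to discharge this by choosing the tubular neighborhood so that its fibres are exponential normal geodesics (as the tubular neighborhood theorem permits) and by invoking that parallel transport along a geodesic is a linear isometry mapping $T_{r(x)}M^{n}$ and $N_{r(x)}M^{n}$ to their counterparts at $x$, which keeps the $w$-dynamics decoupled from the tangential gradient flow. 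Once $\dot w(x)=-w(x)$ is secured, the proof concludes exactly as for Theorem~\ref{thm:main}: $r^{-1}\left(X^{\ast}\right)$ and $M^{n}$ are two asymptotically stable invariant sets, so their intersection $r^{-1}\left(X^{\ast}\right)\cap M^{n}=X^{\ast}$ is asymptotically stable, and its region of asymptotic stability is at least $r^{-1}\left(X\right)$.
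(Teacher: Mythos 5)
Your proposal is correct and takes essentially the same approach as the paper: the paper's own proof consists only of the remark that the proof of Theorem~\ref{thm:main} carries through once $x-r\left(x\right)$ is replaced by $\operatorname{log}\left(x^{-1}r\left(x\right)\right)$ in the second part (citing \cite{Montenbruckb} for details), which is precisely the substitution your normal coordinate $w\left(x\right)$ implements within the same three-step structure. The parallel-transport/curvature subtlety you flag on the $\operatorname{SO}\left(m\right)$ factor is exactly the point the paper delegates to that citation rather than arguing explicitly.
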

\begin{proof}
 The proof of Theorem \ref{thm:main} carries through except that $x-r\left(x\right)$ must be replaced by $\operatorname{log}\left(x^{-1}r\left(x\right)\right)$ in the second part of the proof (cf. \cite[proof of Theorem 1]{Montenbruckb}). 
\end{proof}

\section{Tutorial Examples: Circle and Sphere \\ in the Special Euclidean Groups}\label{sec:circlsphEucl}
The terminology we had to set up in order to work on the special Euclidean group became quite involved. It is instructive to see how the proposed differential equation \eqref{eq:SEODE} reads for a particular choice of $M$. In our next example, we thus explicitly compute the right-hand side of \eqref{eq:SEODE} for a formation {that} should be relevant in applications.
\begin{example}
 Let $m=2$, i.e., consider the special Euclidean group $\operatorname{SE}\left(2\right)$. A formation which should be of practical interest is to have $n$ agents arrange at equal distance around a target, say the origin, and face the target, with some device mounted along a body-fixed axis, say $e_{1}$, the first vector of the standard basis of $\mathbb{R}^{2}$. Letting $p_{i}$ and $R_{i}$ denote the position (in $\mathbb{R}^{2}$) and the orientation (in $\operatorname{SO}\left(2\right)$), respectively, of the $i$th system, then this goal is formalized by requiring that
 \begin{equation}
  p_{i}+R_{i}e_{1}=0, \label{eq:faceorig}
 \end{equation}
shall asymptotically hold for the $i$th system, if possible in a stable fashion. This being said, our target manifold $M$ is constituted by all points in $\operatorname{SE}\left(2\right)$ for which \eqref{eq:faceorig} holds. It is now a convenient fact that $\left\|p_{i}\right\|=1$ and $R_{i}=\Omega p_{i}e_{2}^{\top}-p_{i}e_{1}^{\top}$ is equivalent to $\left(R_{i},p_{i}\right)\in M$, where $e_{2}$ is the second vector of the standard basis of $\mathbb{R}^{2}$. This reveals that $M$ is just the circle, embedded in $\operatorname{SE}\left(2\right)$. Thus, here, we may still employ the differential equations \eqref{eq:circODE} to control the positions and the differential equations
\begin{equation}
 \dot{R}_{i}=R_{i}\operatorname{log}\left(\frac{1}{\left\|p_{i}\right\|}R_{i}^{\top}\left(\Omega p_{i}e_{2}^{\top}-p_{i}e_{1}^{\top}\right)\right) \label{eq:SE2ODE}
\end{equation}
to govern the orientations, wherein $\operatorname{log}:\operatorname{SO}\left(2\right)\to\mathfrak{so}\left(2\right)$ is the logarithmic map. Similarly, we could opt to merely steer $p_{i}$ towards $-R_{i}e_{1}$ and then balance the orientations only. The distinction between these two approaches is that we only have to communicate positions in the former case whilst only having to communicate orientations in the latter. We now consider $n=8$ systems coupled through the unweighted cycle graph $C_{8}$. With this choice of graph, we solved \eqref{eq:circODE} and \eqref{eq:SE2ODE} numerically for some initial condition; the numerical solutions are plotted in Fig. \ref{fig:SEfig1}. Therein, the initial condition is indicated by blue circles with arrows (\raisebox{1pt}{\tikz{\filldraw[blue] (0,0) circle (.06);\draw[->,blue,line width=.8] (0,0) -- (.35,0);}})  and the limiting point is marked by red circles with arrows (\raisebox{1pt}{\tikz{\filldraw[red] (0,0) circle (.06);\draw[->,red,line width=.8] (0,0) -- (.35,0);}}). The arrows are obtained from multiplying the attitudes $R_{i}$ with $e_{1}$. We find that the positions approach an evenly spaced configuration on the circle while facing the origin, as desired. 

If, on the other hand, one wishes that the orientations of our systems shall asymptotically point outwards, then it is sufficient to multiply the argument of the logarithmic map in \eqref{eq:SE2ODE} with $-1$. The resulting numerical solution is depicted in Fig. \ref{fig:SEfig2}. If one, instead, inserts $\Omega$ in between $R_{i}^{\top}$ and $\Omega p_{i}e_{2}^{\top}-p_{i}e_{1}^{\top}$ in that logarithm, then one makes $M$ the submanifold of $\operatorname{SE}\left(2\right)$ composed of points on the circle and orientations aligned with the tangent spaces of the circle, numerical solutions of the resulting differential equation being depicted in Fig. \ref{fig:SEfig2}.

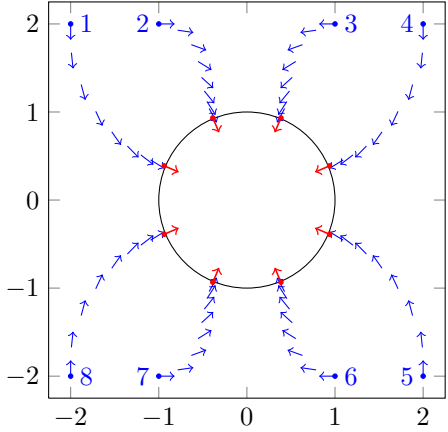
\begin{figure}
\centering
% This file was created by matlab2tikz.
% Minimal pgfplots version: 1.3
%
%The latest updates can be retrieved from
%  http://www.mathworks.com/matlabcentral/fileexchange/22022-matlab2tikz
%where you can also make suggestions and rate matlab2tikz.
%
\begin{tikzpicture}[arrow1/.style={->,color=blue,solid},arrow2/.style={->,color=red,solid,line width=.6}]

\begin{axis}[%
width=15em,
height=15em,
scale only axis,
every outer x axis line/.append style={black},
every x tick label/.append style={font=\color{black}},
xmin=-2.25,
xmax=2.25,
every outer y axis line/.append style={black},
every y tick label/.append style={font=\color{black}},
ymin=-2.25,
ymax=2.25,
]
\draw (axis cs:0,0) circle (100);
\filldraw[blue] (axis cs:-2,2) circle (2.5) node[anchor=west] {$1$};
\filldraw[blue] (axis cs:-1,2) circle (2.5) node[anchor=east] {$2$};
\filldraw[blue] (axis cs:1,2) circle (2.5) node[anchor=west] {$3$};
\filldraw[blue] (axis cs:2,2) circle (2.5) node[anchor=east] {$4$};
\filldraw[blue] (axis cs:2,-2) circle (2.5) node[anchor=east] {$5$};
\filldraw[blue] (axis cs:1,-2) circle (2.5) node[anchor=west] {$6$};
\filldraw[blue] (axis cs:-1,-2) circle (2.5) node[anchor=east] {$7$};
\filldraw[blue] (axis cs:-2,-2) circle (2.5) node[anchor=west] {$8$};
\addplot [arrow1] coordinates{(-2,2) (-2,1.82)};
\addplot [arrow1] coordinates{(-1,2) (-0.82,2)};
\addplot [arrow1] coordinates{(1,2) (0.82,2)};
\addplot [arrow1] coordinates{(2,2) (2,1.82)};
\addplot [arrow1] coordinates{(2,-2) (2,-1.82)};
\addplot [arrow1] coordinates{(1,-2) (0.82,-2)};
\addplot [arrow1] coordinates{(-1,-2) (-0.82,-2)};
\addplot [arrow1] coordinates{(-2,-2) (-2,-1.82)};
\addplot [arrow1] coordinates{(-1.99149197440145,1.6736463287474) (-1.9728995454523,1.49460916485296)};
\addplot [arrow1] coordinates{(-0.78538043780709,1.92878322293415) (-0.607219456254783,1.90311932025201)};
\addplot [arrow1] coordinates{(0.78538043780709,1.92878322293415) (0.607219456254783,1.90311932025201)};
\addplot [arrow1] coordinates{(1.99149197440145,1.6736463287474) (1.9728995454523,1.49460916485296)};
\addplot [arrow1] coordinates{(1.99149197440145,-1.6736463287474) (1.9728995454523,-1.49460916485296)};
\addplot [arrow1] coordinates{(0.78538043780709,-1.92878322293415) (0.607219456254783,-1.90311932025201)};
\addplot [arrow1] coordinates{(-0.78538043780709,-1.92878322293415) (-0.607219456254783,-1.90311932025201)};
\addplot [arrow1] coordinates{(-1.99149197440145,-1.6736463287474) (-1.9728995454523,-1.49460916485296)};
\addplot [arrow1] coordinates{(-1.88020902476172,1.31217851377822) (-1.83375596782875,1.13827607007881)};
\addplot [arrow1] coordinates{(-0.630692826208462,1.76460132093157) (-0.46153896096269,1.70306371009893)};
\addplot [arrow1] coordinates{(0.630692826208462,1.76460132093157) (0.46153896096269,1.70306371009893)};
\addplot [arrow1] coordinates{(1.88020902476172,1.31217851377822) (1.83375596782875,1.13827607007881)};
\addplot [arrow1] coordinates{(1.88020902476172,-1.31217851377822) (1.83375596782875,-1.13827607007881)};
\addplot [arrow1] coordinates{(0.630692826208462,-1.76460132093157) (0.46153896096269,-1.70306371009893)};
\addplot [arrow1] coordinates{(-0.630692826208462,-1.76460132093157) (-0.46153896096269,-1.70306371009893)};
\addplot [arrow1] coordinates{(-1.88020902476172,-1.31217851377822) (-1.83375596782875,-1.13827607007881)};
\addplot [arrow1] coordinates{(-1.71510430371516,1.00759373432504) (-1.63914556803415,0.844406170788562)};
\addplot [arrow1] coordinates{(-0.554693903645049,1.57374057421742) (-0.401842305142334,1.47868129599742)};
\addplot [arrow1] coordinates{(0.554693903645049,1.57374057421742) (0.401842305142334,1.47868129599742)};
\addplot [arrow1] coordinates{(1.71510430371516,1.00759373432504) (1.63914556803416,0.844406170788562)};
\addplot [arrow1] coordinates{(1.71510430371516,-1.00759373432504) (1.63914556803415,-0.844406170788562)};
\addplot [arrow1] coordinates{(0.554693903645049,-1.57374057421742) (0.401842305142334,-1.47868129599742)};
\addplot [arrow1] coordinates{(-0.554693903645049,-1.57374057421742) (-0.401842305142334,-1.47868129599742)};
\addplot [arrow1] coordinates{(-1.71510430371516,-1.00759373432504) (-1.63914556803416,-0.844406170788562)};
\addplot [arrow1] coordinates{(-1.53326223453923,0.774622321980023) (-1.42998036172413,0.627202157512445)};
\addplot [arrow1] coordinates{(-0.514479254524959,1.39324916311317) (-0.381518869500383,1.27191748843007)};
\addplot [arrow1] coordinates{(0.514479254524959,1.39324916311317) (0.381518869500383,1.27191748843007)};
\addplot [arrow1] coordinates{(1.53326223453923,0.774622321980023) (1.42998036172413,0.627202157512445)};
\addplot [arrow1] coordinates{(1.53326223453923,-0.774622321980023) (1.42998036172413,-0.627202157512445)};
\addplot [arrow1] coordinates{(0.514479254524959,-1.39324916311317) (0.381518869500383,-1.27191748843007)};
\addplot [arrow1] coordinates{(-0.514479254524959,-1.39324916311317) (-0.381518869500383,-1.27191748843007)};
\addplot [arrow1] coordinates{(-1.53326223453923,-0.774622321980023) (-1.42998036172413,-0.627202157512445)};
\addplot [arrow1] coordinates{(-1.35613851337563,0.612843074622032) (-1.22997501823824,0.484458553835651)};
\addplot [arrow1] coordinates{(-0.483614253776805,1.23894544373018) (-0.370200105151036,1.09917041276565)};
\addplot [arrow1] coordinates{(0.483614253776805,1.23894544373018) (0.370200105151036,1.09917041276565)};
\addplot [arrow1] coordinates{(1.35613851337563,0.612843074622032) (1.22997501823824,0.484458553835651)};
\addplot [arrow1] coordinates{(1.35613851337563,-0.612843074622032) (1.22997501823824,-0.484458553835651)};
\addplot [arrow1] coordinates{(0.483614253776805,-1.23894544373018) (0.370200105151036,-1.09917041276565)};
\addplot [arrow1] coordinates{(-0.483614253776805,-1.23894544373018) (-0.370200105151036,-1.09917041276565)};
\addplot [arrow1] coordinates{(-1.35613851337563,-0.612843074622032) (-1.22997501823824,-0.484458553835651)};
\addplot [arrow1] coordinates{(-1.20342821014629,0.512252001914143) (-1.06041293319551,0.402951858555651)};
\addplot [arrow1] coordinates{(-0.453789544180116,1.11968388428623) (-0.356619557894619,0.968165523167449)};
\addplot [arrow1] coordinates{(0.453789544180117,1.11968388428623) (0.356619557894619,0.968165523167449)};
\addplot [arrow1] coordinates{(1.20342821014629,0.512252001914143) (1.06041293319551,0.402951858555651)};
\addplot [arrow1] coordinates{(1.20342821014629,-0.512252001914143) (1.06041293319551,-0.402951858555651)};
\addplot [arrow1] coordinates{(0.453789544180116,-1.11968388428623) (0.356619557894619,-0.968165523167449)};
\addplot [arrow1] coordinates{(-0.453789544180116,-1.11968388428623) (-0.356619557894619,-0.968165523167449)};
\addplot [arrow1] coordinates{(-1.20342821014629,-0.512252001914143) (-1.06041293319551,-0.402951858555651)};
\addplot [arrow1] coordinates{(-1.09276039980103,0.455541141959839) (-0.939296505410684,0.36147353728108)};
\addplot [arrow1] coordinates{(-0.428107561620019,1.03963055580648) (-0.342269176702439,0.881417222361374)};
\addplot [arrow1] coordinates{(0.428107561620019,1.03963055580648) (0.342269176702439,0.881417222361374)};
\addplot [arrow1] coordinates{(1.09276039980103,0.455541141959839) (0.939296505410685,0.361473537281079)};
\addplot [arrow1] coordinates{(1.09276039980103,-0.455541141959839) (0.939296505410685,-0.361473537281079)};
\addplot [arrow1] coordinates{(0.428107561620019,-1.03963055580648) (0.342269176702439,-0.881417222361374)};
\addplot [arrow1] coordinates{(-0.428107561620019,-1.03963055580648) (-0.342269176702439,-0.881417222361374)};
\addplot [arrow1] coordinates{(-1.09276039980103,-0.455541141959839) (-0.939296505410684,-0.361473537281079)};
\addplot [arrow2] coordinates{(-0.938385213784188,0.38868173731736) (-0.773033640278409,0.317557495437022)};
\addplot [arrow2] coordinates{(-0.386733226799246,0.93368795017669) (-0.316386456649477,0.76800492692673)};
\addplot [arrow2] coordinates{(0.386733226799246,0.93368795017669) (0.316386456649477,0.76800492692673)};
\addplot [arrow2] coordinates{(0.938385213784188,0.38868173731736) (0.773033640278409,0.317557495437022)};
\addplot [arrow2] coordinates{(0.938385213784188,-0.388681737317361) (0.773033640278409,-0.317557495437022)};
\addplot [arrow2] coordinates{(0.386733226799246,-0.93368795017669) (0.316386456649477,-0.76800492692673)};
\addplot [arrow2] coordinates{(-0.386733226799246,-0.93368795017669) (-0.316386456649477,-0.76800492692673)};
\addplot [arrow2] coordinates{(-0.938385213784188,-0.388681737317361) (-0.773033640278409,-0.317557495437022)};
\filldraw[red] (axis cs:-0.938385213784188,0.38868173731736) circle (2.5);
\filldraw[red] (axis cs:-0.386733226799246,0.93368795017669) circle (2.5);
\filldraw[red] (axis cs:0.386733226799246,0.93368795017669) circle (2.5);
\filldraw[red] (axis cs:0.938385213784188,0.38868173731736) circle (2.5);
\filldraw[red] (axis cs:0.938385213784188,-0.388681737317361) circle (2.5);
\filldraw[red] (axis cs:0.386733226799246,-0.93368795017669) circle (2.5);
\filldraw[red] (axis cs:-0.386733226799246,-0.93368795017669) circle (2.5);
\filldraw[red] (axis cs:-0.938385213784188,-0.388681737317361) circle (2.5);
\end{axis}
\end{tikzpicture}%\hphantom{-1-1-!}
\caption{Numerical solution of \eqref{eq:circODE} and \eqref{eq:SE2ODE} for the cycle graph $C_{8}$.}\label{fig:SEfig1}
\end{figure}

\begin{figure}
\centering
\input{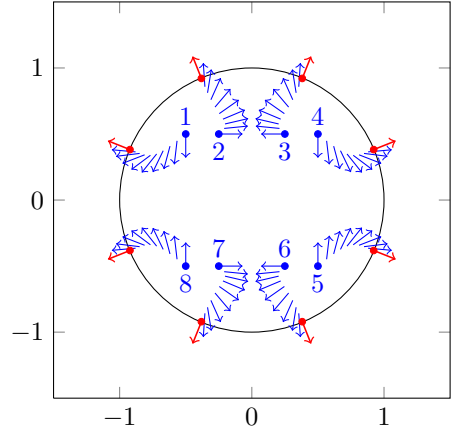}\hphantom{-1-1-!}
\caption{Numerical solution of \eqref{eq:circODE} and \eqref{eq:SE2ODE} for the cycle graph $C_{8}$, where $\Omega p_{i}e_{2}^{\top}-p_{i}e_{1}^{\top}$ is replaced by $p_{i}e_{1}^{\top}-\Omega p_{i}e_{2}^{\top}$ in \eqref{eq:SE2ODE}.}\label{fig:SEfig2}
\end{figure}

\begin{figure}
\centering
\input{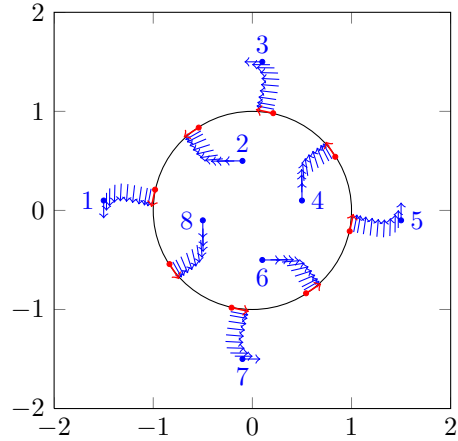}\hphantom{-1-1-!}
\caption{Numerical solution of \eqref{eq:circODE} and \eqref{eq:SE2ODE} for the cycle graph $C_{8}$, where $R_{i}^{\top}$ is replaced by $R_{i}^{\top}\Omega$ in \eqref{eq:SE2ODE}.}\label{fig:SEfig3}
\end{figure}

Now if $m$ was $3$, i.e., our systems moved in $\operatorname{SO}\left(3\right)$, and we would again ask for our systems to eventually face the origin, now with some device mounted along the body-fixed axis $e_{3}$ (the third vector of the standard basis of $\mathbb{R}^{3}$), then we could apply the injection \eqref{eq:sphereinj} to $-\left(1/\left\|p_{i}\right\|\right)p_{i}$ and replace $\left(1/\left\|p_{i}\right\|\right)\left(\Omega p_{i}e_{2}^{\top}-p_{i}e_{1}^{\top}\right)$ in \eqref{eq:SE2ODE} with the obtained rotation matrix in order to asymptotically stabilize the desired formation, where now $\operatorname{log}:\operatorname{SO}\left(3\right)\to\mathfrak{so}\left(3\right)$. For $n=5$ systems coupled through the unweighted complete graph $K_{5}$, we solved the resulting differential equation, together with \eqref{eq:sphereODE}, numerically for some initial condition, and plotted the numerical solution in Fig. \ref{fig:SE3fig}. Therein, again, the initial condition is indicated by blue circles with arrows (\raisebox{1pt}{\tikz{\filldraw[blue] (0,0) circle (.06);\draw[->,blue,line width=.8] (0,0) -- (.35,0);}})  and the limiting point is marked by red circles with arrows (\raisebox{1pt}{\tikz{\filldraw[red] (0,0) circle (.06);\draw[->,red,line width=.8] (0,0) -- (.35,0);}}). The arrows are obtained from multiplying the attitudes $R_{i}$ with $e_{3}$. We find that the positions approach the vertices of a triangular bipyramid whilst facing the origin, as desired.
 
\begin{figure}
\centering
\input{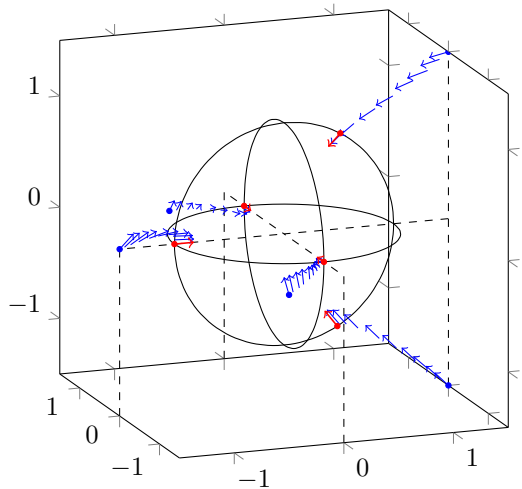}\hphantom{-1-1-!}
\caption{Numerical solution of \eqref{eq:sphereODE} and \eqref{eq:SE2ODE} for the complete graph $K_{5}$, where $\left(1/\left\|p_{i}\right\|\right)\left(\Omega p_{i}e_{2}^{\top}-p_{i}e_{1}^{\top}\right)$ is replaced by the rotation matrix obtained from applying \eqref{eq:sphereinj} to $-\left(1/\left\|p_{i}\right\|\right)p_{i}$ in \eqref{eq:SE2ODE}.}\label{fig:SE3fig}
\end{figure}

\end{example}

\section{Further Extensions}\label{sec:ext}

Some feasible extensions of the techniques proposed above will not be elaborated in detail herein. Yet, we briefly point out a few options to enhance our control \eqref{eq:ODE}.

{\emph{Backstepping:} If we could not influence the velocities of our systems directly, but could only actuate them on the acceleration level, i.e., if we had to control second-order dynamics, such as they arise in mechanical systems, then our approach is still applicable by virtue of the backstepping technique. More precisely, the system $\dot{x}=v$, $\dot{v}=u$ would asymptotically behave as \eqref{eq:ODE} if we applied the control $u=-v-\operatorname{J}_{f}\left(x\right)v-2f\left(x\right)$, wherein $f\left(x\right)$ denotes the right-hand side of \eqref{eq:ODE} and $\operatorname{J}_{f}:\mathbb{R}^{mn}\to\mathbb{R}^{mn\times mn}$ is the Jacobian of~$f$.}

\emph{Nearest neighbor communication:} Adaptive communication graphs could readily be incorporated into our setting. An example {that} should be of particular interest is to have the $i$th system communicate only with the systems whose positions are in a ball of certain radius, centered at $x_{i}$. If each system only communicated with its two closest neighbors, then we would precisely arrive at the cycle graph, which turned out to be suited for stabilization of evenly spaced configurations on the circle, as investigated in Example \ref{ex:circ}. More general, if each system only communicated with its $k$ closest neighbors, with $k$ an even positive number, we had a $k$-regular communication graph, leading to an evenly spaced configuration on the circle, as well, as discussed in Example \ref{ex:graphs}. 

\emph{Moving submanifolds:} Suppose we would not want all systems to eventually come to rest on $M$, but to have them collectively move in the desired formation. This could be formulated by translating our manifold $M$, i.e., to add the solution to some exosystem  $\dot{z}=f\left(z\right)$ to $M$ and hence replace $M$ by the affine translation $M+z$ throughout. Moreover, $f\left(z\right)$ would have to be added to the differential equation governing $x_{i}$, for all $i$, in order to guarantee asymptotic tracking of $M+z$.

\emph{Moving on the submanifold:} If we wanted our systems to move on the submanifold in the desired formation, such as depicted in Fig. \ref{fig:Efig_UTILITY}, then this could be incorporated into our setting by defining a vector field $f$ on $M$ and then adding $f\left(r\left(x_{i}\right)\right)$ to the differential equation governing $x_{i}$, for all $i$. This would cause all systems to move along the orbits of $f$ but to maintain the formation determined by the maxima of $\phi$ while doing so.

\emph{Formation shapes with singularities:} Triangular formations are relevant in applications, particularly in aviation \cite{Wang1999}, and have thus also been subject to theoretical studies \cite{Cao2011,Doerfler2010}. It would therefore be of interest to treat the case of $M$ being a polyhedron. In order to apply our methods, we would have to remove the singular points of $M$, for instance by locally smoothing them out, in order to recover the structure of a smooth manifold. This can indeed be done as locally as desired as long as the singular points are isolated (which is the case for polyhedra). In \cite{Montenbruck2015}, we illustrated this possibility on the very example of a triangle (cf. \cite[Fig. 5]{Montenbruck2015}).

\section{Conclusion}\label{sec:conc}\addtolength{\textheight}{-18.975cm}
We proposed a method for solving formation control problems. Our approach is based upon letting the shape of our formation be defined by some smooth compact submanifold. We then had our systems maximize a certain scalar field, defined on the submanifold, which itself has a rich history in the exact sciences (in which context the maximizers are called Fekete points). The control we proposed consists of a decentralized and a distributed component by construction. We demonstrated the flexibility of our approach on different examples and provided a graph-theoretical interpretation of the configurations that will eventually be attained through our control. Lastly, we equipped our control with the capability of taking into account formations {that} also specify the orientations of the systems and pointed out several further extensions.

\bibliographystyle{IEEEtran}
\bibliography{retrbalpap}

\end{document}